\newcommand{\A}{\mathcal{A}}
\theoremstyle{plain}
\newtheorem{theorem}{Theorem}
\newtheorem{lemma}[theorem]{Lemma}
\newtheorem{proposition}[theorem]{Proposition}
\newtheorem{corollary}[theorem]{Corollary}
\theoremstyle{remark} 
\newtheorem{remark}[theorem]{Remark}
\theoremstyle{definition} 
\newtheorem{defn}[theorem]{Definition}
\title{A Tur\'an-type Problem for Circular Arc Graphs}
\author{Rosalie Carlson, Stephen Flood, Kevin O'Neill, and Francis Edward Su}
\thanks{Supported in part by NSF Grants DMS-0701308 and DMS-1002938 (Su).}
\begin{document}
\maketitle

\begin{abstract}
A circular arc graph is the intersection graph of a collection of connected arcs on the circle.  We solve a Tur\'an-type problem for circular arc graphs: for $n$ arcs, if $m$ and $M$ are the minimum and maximum number of arcs that contain a common point, what is the maximum number of edges the circular arc graph can contain?  We establish a sharp bound and produce a maximal construction.  For a fixed $m$, this can be used to show that if the circular arc graph has enough edges, there must be a point that is covered by at least $M$ arcs.  In the case $m=0$, we recover results for interval graphs established by Abbott and Katchalski (1979).  We suggest applications to voting situations with interval or circular political spectra.
\end{abstract}

\section{Introduction}

In extremal graph theory, Tur\'an's theorem \cite{turan} says, roughly speaking, that if a graph $G$ on $n$ vertices contains enough edges, then $G$ contains a $K_r$, i.e., a complete subgraph among some set of $r$ vertices.
Here, ``enough'' means roughly more than $\frac{r-2}{r-1}$ of the ${n \choose 2}$ possible edges.  Tur\'an's result holds for general graphs, but in certain classes of graphs we may expect that we don't need quite as many edges to produce a $K_r$.

One class of graphs of great interest are interval graphs, which are intersection graphs of a collection of intervals on the line.  For interval graphs, Abbott and Katchalski \cite{abbott-katchalski} have shown that to guarantee a $K_r$, a sufficient fraction of edges is $\frac{r}{n}(2-  \frac{r}{n})$.  And since a collection of intervals on the line has the Helly property, a complete subgraph in the intersection graph corresponds to a collection of intervals that contain a common point. In many applications it is this common point that is of interest.  For instance, if each interval corresponds to a voter and represents that voter's approval of political positions along a spectrum, then such results will show the existence of a point on the political spectrum that is approved by a large number of voters(e.g. \cite{berg, hardin}).  Or, if each interval corresponds to a person's time-availability for a meeting, then such results show the existence of a time when a large number of people could meet.

We are interested in studying similar questions for a collection of intervals on a circle.  Following the literature (e.g., \cite{hardin, lin}), we call these intervals (circular) \emph{arcs} and their intersection graph $G$ a \emph{circular arc graph}.  Motivated by the examples above, the problem we study is: 
\begin{quote}
How many edges in $G$ are needed to guarantee that a given number of arcs contain a common point?
\end{quote}
However, since a collection of arcs may not in general have the Helly property, locating a large clique in $G$ is not equivalent to finding a set of arcs that contain a common point (although see \cite{lin} for an examination of sufficient conditions for a collection of circular arcs to have the Helly property).  For this reason we obtain our results by studying the combinatorics of the arcs themselves rather than finding large cliques in the intersection graph.

In fact, we construct a set of $n$ circular arcs whose intersection graph  $G$ has a maximal number of edges given every point in the circle is in at most $M$ arcs and at least $m$ arcs.  Using this maximal construction, we can prove one of our main results (Theorem~\ref{discreteEdgeCount}) which specifies a number $e_{min}$ and proves:
\begin{quote}
if $G$ has more than $e_{min}$ edges and every point is in at least $m$ arcs, then there must be point of the circle that is in at least $M$ arcs.
\end{quote}
Our other main result (Theorem \ref{continuousEdgeCount}) gives a continuous analogue in the limit when the number of arcs is asympotically large by specifying an $\alpha$ in terms of a $\beta$ and $\gamma$ such that:
\begin{quote}
if $G$ has more than proportion $\alpha$ of all possible edges and every point is in at least proportion $\gamma$ of the arcs, then there must a be point of the circle that is in at least proportion $\beta$ of the arcs.
\end{quote}
Theorem \ref{continuousEdgeCount} shows that \[
  \alpha = \left\{
  \begin{array}{l l}
    \beta(2-(1-\gamma)^2\beta) &  \quad 0 \le \beta \le \frac{1}{2}\\
    \beta(\gamma+1)(2-\beta(\gamma+1)) & \quad \frac{1}{2} < \beta \le \frac{1}{1+\gamma}.\\
  \end{array} \right.
\]

When $m=0$ (and hence $\gamma=0$), our results recover those of Abbott and Katchalski \cite{abbott-katchalski}, but by very different methods.  Theorems~\ref{discreteEdgeCount} and \ref{continuousEdgeCount} also apply in cases where $m$ is unknown; then we assume $m = M-1$ (hence $\gamma=1$) to obtain a worst-case bound.  For example, if $\beta < 1/2$ and $\gamma=1$, then $\alpha = 2\beta$, which would imply that to ensure that there is a point that is proportion $\beta=1/4$ of all arcs, it is enough to have $\alpha = 1/2$ of all possible edges appear in the intersection graph of $G$.

There is a nice voting theory intepretation.  Following \cite{berg}, we define a \emph{society} $S$ to be a triple $(X, V, \mathcal{A})$ where $X$ is a geometric space (the \emph{political spectrum}), $V$ is a set of voters, and $\mathcal{A}$ is a collection of \emph{approval sets}, one for each voter.  One usually thinks of the spectrum $X$ as a line (with conservative positions on the right and liberal positions on the left) and approval sets as intervals on the line.  However, there are many situations where the spectrum is naturally a circle \cite{golumbic-trenk, hardin}, and the approval sets are naturally circular arcs.  We may think of two overlapping approval sets as two voters ``agreeing'' on some platform.  The above result guarantees, for instance, that if 1/2 of all pairs of voters can agree on a platform, then there is a platform that at least 1/4 of all voters would simultaneously approve.  Furthermore if we know that some platform is approved by fewer voters, the fraction 1/4 can be improved.

%Such an interpretation yields a different set of questions for circular arc graphs than has been studied in the literature.  
%Recent research on circular arc graphs \cite{joeris} has studied when there exists a circular arc representation of a given graph, and how to build one.

%Description of sections subject to change.
%The following section defines the key terms and notation we use throughout the paper. Section 3 is devoted to deriving a relation betweeen the agreement at the endpoints of arcs and the total number of edges in the intersection graph.  Sections 4 and 5 use this result to prove a sharp bound for the number of edges required to guarantee a certain agreement.  Section 6 expands the result to include double intersections.  The final section explores a variation of this problem that is particularly interesting from a voting theory perspective: the case where voters have preferences on a finite number of candidates instead of on a continuous spectrum.

We give an outline of our paper.  All terms used here are defined in section 2.
We want to establish a bound $e_{min}(M,m,n)$ such that if at least $e_{min}$ pairwise intersections are present in a collection of $n$ arcs $\mathcal{A}$ with minimum agreement number $m$, a maximum agreement number of at least $M$ can be guaranteed.  This bound will be derived by fixing a particular $M$ and $m$ for a collection of $n$ arcs, and determining the maximum possible number of pairwise intersections given those conditions.  Then, if there are more than that number of pairwise intersections, we will know the maximum agreement number must be at least $M+1$.

To determine the maximum number of pairwise intersections, we use a formula we derive in Section 3 that relates the number of edges in the intersection graph $G$ of $\mathcal{A}$ with the agreement number at the endpoints of $\mathcal{A}$.  Specifically, the formula provided by Theorem~\ref{edgeformula} allows the number of edges plus double intersections to be calculated from the running count sum of $\mathcal{A}$ (see Definition~4) and the number of arcs in $\mathcal{A}$.

Section 4 establishes that for fixed maximum and minimum agreement numbers, the maximum edge count as well as the maximum number of double intersections can be achieved with (possibly different) collections of arcs that maximize the running count sum (Theorem~\ref{maxatmax}).  This allows us to restrict our attention to a single $LR$-sequence with maximum running count (for fixed $M,m,n$) (Proposition~\ref{maxlr}).

%In Section 3, we derive a formula relating the number of arcs in a collection $\mathcal{A}$ intersecting at specific endpoints to the number of edges in the intersection graph of $\mathcal{A}$.  In Section 4, we establish that given certain restrictions (regarding agreement) on a collection of arcs, the maximum number of edges can occur when a certain variable of this formula is also maximized.

Section 5 uses these results to construct a specific collection of arcs for fixed $n,M$ and $m$ that has the maximum possible number of edges.  Using Theorem~\ref{edgeformula}, the number of edges and double intersections are counted, and maximality is verified (Theorem~\ref{discreteEdgeCount}).  We also give an asymptotic result for large $n$ in Theorem~\ref{continuousEdgeCount}.  Section 6 refines this result when there is a known number of double intersections, improving the bound from Theorems~\ref{discreteEdgeCount} and \ref{continuousEdgeCount}.  We conclude with some observations about future work, including applications in voting theory.

\section{Some important definitions}
Here we introduce some of the terms and notation to be used in the rest of the paper.  Let $\mathcal{A}$ be a collection of $n$ arcs on a circle $C$.  We will assume that all endpoints in $\mathcal{A}$ are distinct.  (If two endpoints are not distinct, one of them can be moved by a small distance $\epsilon$ so that the intersections of the arcs are not changed.)    Moving clockwise around the circle, each arc $A_i \in \mathcal{A}$ has \textbf{left endpoint} $x_{i}$ and \textbf{right endpoint} $y_{i}$.  We may write $A_i=[x_i,y_i]$.

\begin{figure}[h]
  \begin{center}
     \includegraphics[scale=0.48]{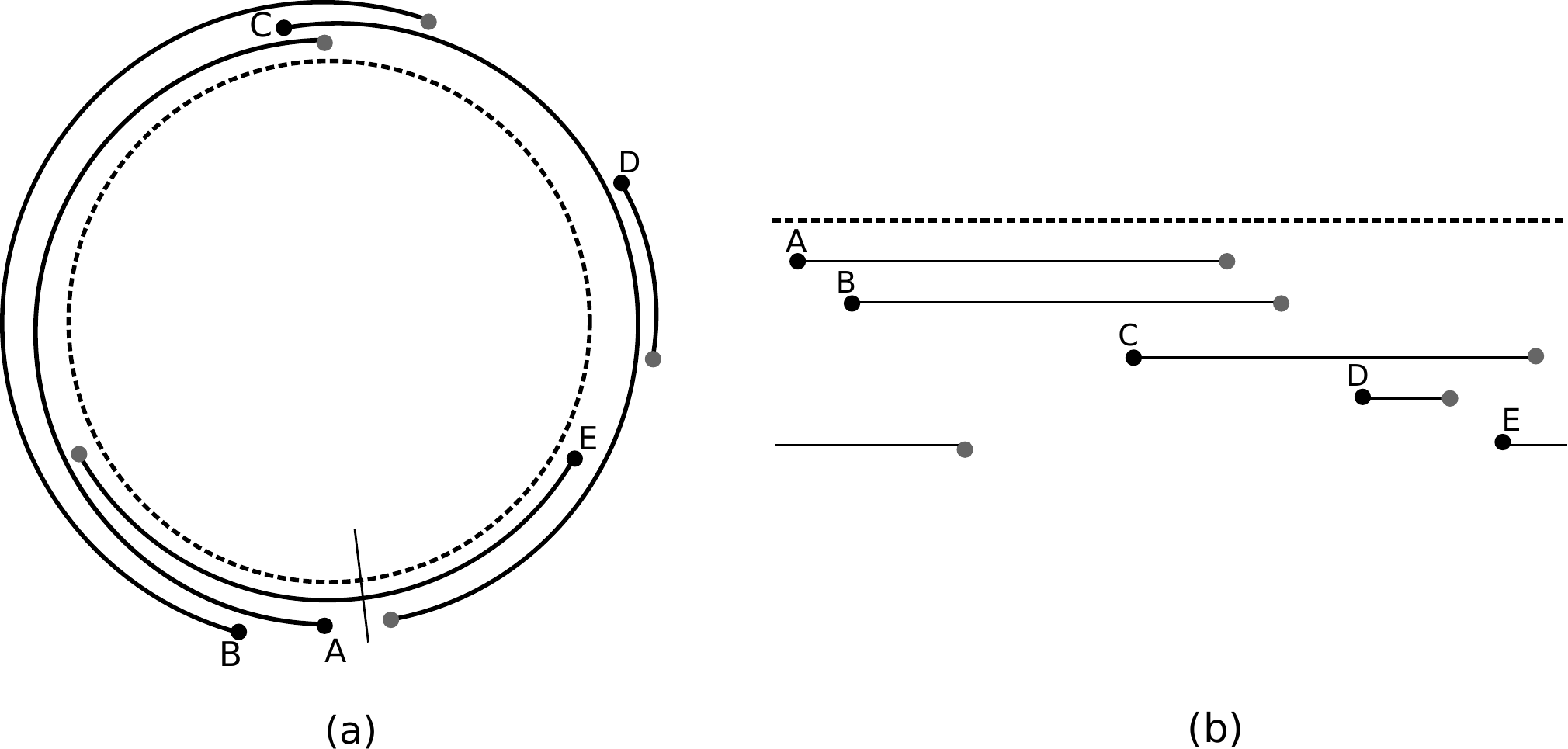}
  \end{center}
  \caption{\small{A set of circular arcs $\mathcal{A}$ represented on (a) a circle and (b) an interval.  See \cite{hardin}.}}
  \label{Def-example}
\end{figure}

\begin{defn}
Let the \textbf{intersection graph} of $\mathcal{A}$ denote the graph $G$ which represents the arcs of $\mathcal{A}$ as vertices, where two vertices are connected by an edge in $G$ if their corresponding arcs intersect in $\mathcal{A}$.  When we say that $\A$ \textbf{has $e$ edges}, we will mean that the intersection graph $G$ has $e$ edges.  We will often refer to $e$ as the \textbf{edge count} of $\A$.
\end{defn}

\begin{defn}
For $p\in C$, define $a(p)$, the \textbf{agreement number} of $p$, as the number of arcs of $\mathcal{A}$ containing $p$.  Let
\[M = \max_{p\in C} a(p)\]
and
\[m = \min_{p\in C} a(p).\]
We call $M$ the \textbf{maximum agreement number} of $\mathcal{A}$ and $m$ the \textbf{minimum agreement number} of $\mathcal{A}$.  The \textbf{agreement proportion} of $\mathcal{A}$ is defined as $\frac{M}{n}$.  The maximum agreement of the collection of arcs in Figure~\ref{Def-example} is 3, its minimum agreement is 1, and its agreement proportion is $\frac{3}{5}$.  Unless otherwise specified, $n$ will always denote the number of arcs under consideration, and $M$ and $m$ will always denote their maximum  and minimum agreement numbers.  A family of arcs will \textbf{satisfy conditions $M,m,n$} if it has $n$ arcs, maximum intersection $M$ and minimum intersection $m$.
\end{defn}

\begin{defn}
The $LR$-sequence $[q_i]$ of $\mathcal{A}$ is a sequence of symbols $L$ and $R$ derived by starting at any point, moving clockwise around the circle recording an $L$ for every left endpoint and an $R$ for every right endpoint.  $\mathcal{A}$ may have many different $LR$ sequences depending on the starting point.  If $A \in \mathcal{A}$ is an arc, let $L_A$ or $R_A$ be its left or right endpoint in the $LR$-sequence.  The set of arcs in Figure~\ref{Def-example} has the $LR$ sequence \[[L,L,R,L,R,R,L,R,L,R],\] starting at the left endpoint of arc $A$.
\end{defn}

\begin{defn}
The \textbf{running count} of an arc endpoint (or member of an $LR$ sequence) is the agreement number just to the right of that endpoint.  A \textbf{running count sequence} $[r_i]$ will consist of the corresponding running counts of a particular $LR$ sequence.  Starting at the left endpoint of arc $A$, the arcs of Figure~\ref{Def-example} have the running count sequence \[[2,3,2,3,2,1,2,1,2,1].\]  Note that the largest running count will be $M$ and the smallest running count $m$.
\end{defn}

\begin{defn}
The \textbf{running count sum} of $\mathcal{A}$ is the sum of numbers in the running count sequence. (Note that the running count sum of $\mathcal{A}$ does not depend on where the running count sequence starts.)  The arcs of Figure~\ref{Def-example} have a running count sum of 19.  A \textbf{maximal running count sequence} will refer to any running count sequence with the greatest possible running count sum for given conditions $M,m,n$.  A \textbf{maximal $LR$-sequence} is an $LR$-sequence corresponding to a maximal running count sequence.
\end{defn}

\begin{defn}
Suppose that arcs $A_1$ and $A_2$ intersect such that each arc contains both endpoints of the other, covering the entire circle between them.  We call their intersection a \textbf{double intersection}, and say that $A_1$ and $A_2$ \textbf{doubly intersect}. (See Figure~\ref{DI}.)
\end{defn}

\begin{figure}[h]
 % \vspace{-25pt}
  \begin{center}
     \includegraphics[scale=0.4]{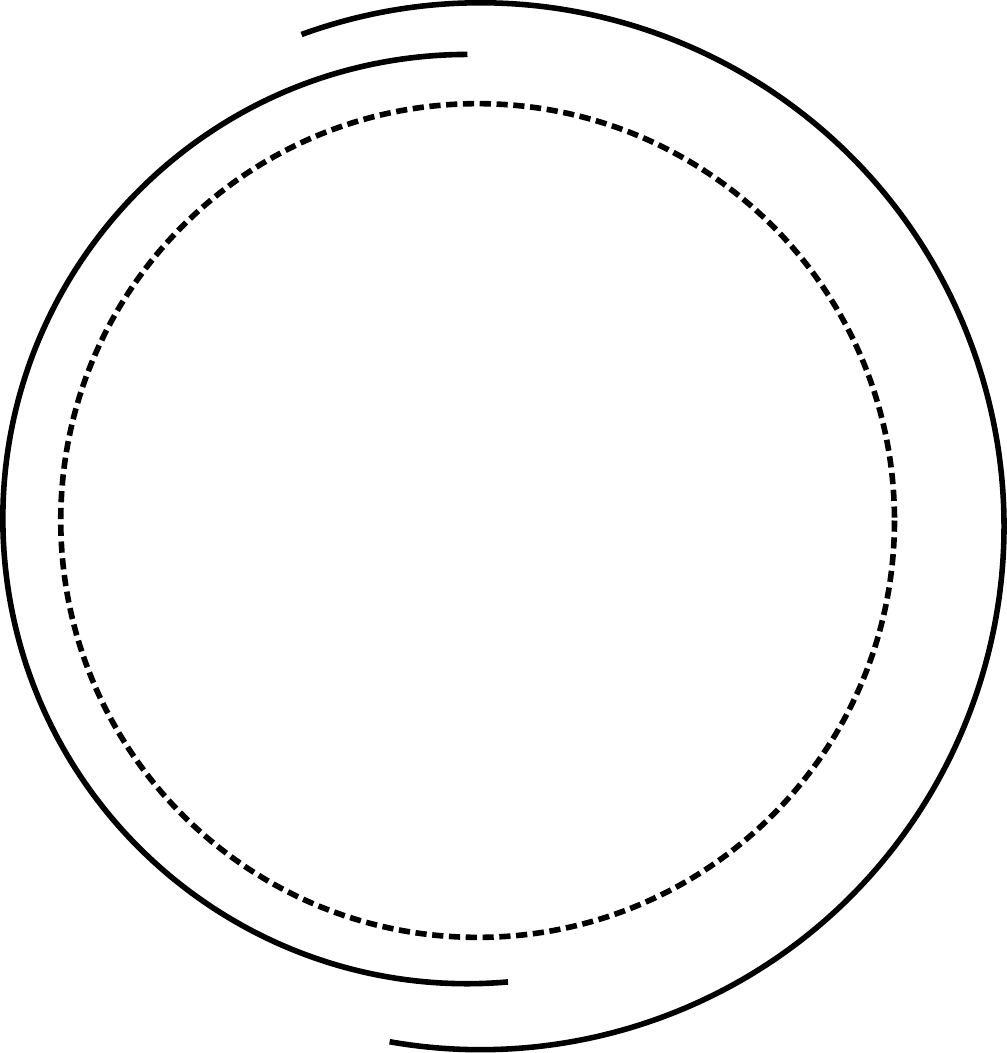}
  \end{center}
  \vspace{-10pt}
  \caption{\small{Two doubly intersecting arcs.}}
  \vspace{-10pt}
  \label{DI}
\end{figure}

\section{Running Counts and the Edge Formula}
We shall be interested in counting the edges of a collection of circular arcs, as well as finding a point covered by a large number of arcs.  The following result provides a helpful connection between these goals.

\begin{theorem}[Edge Formula]\label{edgeformula}
Given a collection $\mathcal{A}$ of $n$ circular arcs, with $e$ edges, $d$ double intersections, and a running count sum of $C$, we have:
\begin{equation}\label{Edge Formula}
d+e = \frac{C-n}{2}.
\end{equation}
\end{theorem}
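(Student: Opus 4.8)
The plan is to evaluate the running count sum $C$ by a double-counting argument: I would first attribute to each individual arc its total contribution to all the running counts, and then reorganize that total over unordered pairs of arcs according to how the two arcs sit relative to one another on the circle.

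First I would rewrite $C$ by interchanging the order of summation. Since the running count at an endpoint is the agreement number of the region immediately clockwise of it, i.e. the number of arcs covering that region, we get
\[
C = \sum_{i} r_i = \sum_{i} \#\{A \in \A : \text{the region just clockwise of } q_i \text{ lies in } A\} = \sum_{A \in \A} c(A),
\]
where $c(A)$ counts the endpoints $q_i$ whose immediately-following region lies in $A$. Because all endpoints are distinct, the region just clockwise of $q_i$ lies in $A=[x_A,y_A]$ exactly when $q_i$ lies in the half-open arc $[x_A,y_A)$: just past the left endpoint $x_A$ one is inside $A$, one stays inside until reaching $y_A$, and just past $y_A$ one is outside. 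Hence $c(A) = 1 + (\text{number of endpoints lying strictly inside } A)$, where the $1$ accounts for $x_A$ itself. Summing over $A$ and noting that an arc's own endpoints are not strictly interior, this yields $C = n + \sum_{A\neq B} f(A,B)$, where $f(A,B)$ denotes the number of endpoints of $B$ lying strictly inside $A$.

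Next I would run a case analysis on each unordered pair $\{A,B\}$ according to the cyclic order of their four endpoints. Up to symmetry there are exactly four configurations: (i) $A$ and $B$ are disjoint; (ii) they overlap simply, with neither containing the other; (iii) one arc contains the other; and (iv) $A$ and $B$ doubly intersect. For each I would compute the symmetric quantity $f(A,B)+f(B,A)$, obtaining $0$ in case (i), $1+1=2$ in case (ii), $2+0=2$ in case (iii), and $2+2=4$ in case (iv). Writing $n_1,n_2,n_3,n_4$ for the number of pairs in each case, this gives $C - n = 2n_2 + 2n_3 + 4n_4$.

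Finally I would read off $e$ and $d$ from the same classification: a pair contributes an edge precisely in cases (ii), (iii), and (iv), so $e = n_2+n_3+n_4$, while a pair is a double intersection only in case (iv), so $d = n_4$. Then $2(d+e) = 2n_2+2n_3+4n_4 = C-n$, which is the claimed identity. The main obstacle is the case analysis in the third step: one must verify the cyclic configurations exhaustively and, in particular, correctly separate the disjoint case from the double-intersection case and account for the asymmetry of containment, since these are exactly the distinctions that make the circular setting differ from intervals on a line.
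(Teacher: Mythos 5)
Your proof is correct, and it takes a genuinely different route from the paper. The paper proceeds by induction on $n$: it picks an arc, shrinks its right endpoint counterclockwise past every endpoint it contains (tracking how $e$, $d$, and $C$ change at each crossing), then deletes the resulting tiny arc and invokes the inductive hypothesis. You instead prove the identity in one pass by double counting: rewriting $C$ as $n$ plus the number of ordered pairs $(A,B)$ with an endpoint of $B$ interior to $A$, and then classifying each unordered pair by its cyclic configuration (disjoint, simple overlap, containment, double intersection) to see that it contributes $0$, $2$, $2$, or $4$ respectively to $C-n$, while contributing $0$, $1$, $1$, or $2$ to $e+d$. Your case analysis is exhaustive and the arithmetic checks out: with all endpoints distinct, the intersection of two arcs has at most two components, so those four configurations are the only ones, and the containment and double-intersection cases are correctly distinguished (containment is an edge but not a double intersection). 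What your approach buys is a structural explanation of where the formula comes from --- each intersecting pair contributes exactly $2$ to $C-n$, with doubly intersecting pairs contributing $2$ more --- rather than a verification that the identity is preserved under a reduction; what the paper's induction buys is that each step only requires checking local changes at a single endpoint crossing, rather than a global classification of pair configurations. Both arguments rely on the same underlying geometry, namely how the endpoints of two arcs can interleave on a circle.
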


\begin{proof} We proceed by induction on $n$.

Base case: when $n=1$, there is one arc and no intersections at all, so $e=d=0$.  The running count sequence is $[1,0]$, hence $C=1$ and \eqref{Edge Formula} holds.

Now suppose that \eqref{Edge Formula} holds for $n=k$.  Consider a collection of $k+1$ circular arcs. Pick an arc $A_1=[x_1,y_1]$.  We reduce this collection of $k+1$ arcs to a collection of $k$ arcs in two steps, showing that both sides of \eqref{Edge Formula} are changed by the same amount in each step.

Step 1: Let $p$ be the number of left endpoints contained in $A_1$ (other than $x_1$).  Move $y_1$ counterclockwise about the circle until there are no endpoints between it and $x_1$.  Call this new arc $A'_1=[x_1,y'_1]$.  

Consider the left hand side of \eqref{Edge Formula}.  When $y_1$ passes a left endpoint, then the two arcs no longer intersect at that left endpoint.  However, the two arcs may still intersect at their other endpoints.  If we began with a double intersection, then they now only intersect once, and $d$ decreases by 1.  If we began with a single intersection, then they no longer intersect, so $e$ decreases by 1.  When $y_1$ passes a right endpoint, there is no change in $d$ or $e$ as can be seen by inspection.  Thus, the left hand side of \eqref{Edge Formula} decreases by $p$.

For each left endpoint that $y_1$ passes on the way, $C$ is decreased by 2.  To see this, consider the running counts at the two endpoints.  When the overlap of their arcs is removed, each endpoint intersects one fewer arc.  Hence, the running count of each endpoint decreases by one.  For each right endpoint that $y_1$ passes, $C$ remains unchanged.  Thus, in total, $C$ decreases by $2p$ and the right hand side of \eqref{Edge Formula} decreases by $p$.

Step 2: Remove $A'_1$ from the collection of arcs.  As $A'_1$ is arbitrarily small, $A'_1$ intersects $A_j$ if and only if $x_1\in A_j$.  Thus, the decrease in the number of edges is equal to the running count at $x_1$ minus 1, which we call $r$.  (Observe that the running count at $y'_1$ is $r$.)  There is no change in $d$ because no other endpoints are contained in $A'_1$.  Therefore, the left hand side of \eqref{Edge Formula} decreases by $r$.

Since the removed arc doesn't intersect at any other endpoints, none of the running counts are changed, except for the two corresponding to this removed arc, which are removed from the sum.  Thus, $C$ decreases by $r+(r+1)=2r+1$, and since $n$ decreases by 1, the right hand side of \eqref{Edge Formula} decreases by $r$.

Since \eqref{Edge Formula} holds for $n=k$ and both steps preserve equality, \eqref{Edge Formula} holds for $n=k+1$, and by induction it is established for all $n$.
\end{proof}

\section{Maximizing edges by maximizing the running count sum}

Given conditions $M,m,n$, we wish to construct collections of arcs which maximize edges or double intersections.  As a first (simplifying) step, we show that we may restrict our attention to collections of arcs with a maximal $LR$-sequence (Theorem~\ref{maxatmax}).  The following result provides a maximal $LR$-sequence which is unique up to the choice of starting point.

\begin{proposition}\label{maxlr}
Fix parameters $M,m$, and $n$.  Then a maximal running count sequence is:
\[[m+1,m+2,\ldots,M-2,M-1,M,M-1,M,\ldots,M,M-1,M-2,\ldots,m+1,m]\]
and its running count sum is
\[C_{max} = (M+m)(M-m) + (2M-1)(n-M+m).\]
\end{proposition}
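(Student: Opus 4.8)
The plan is to recast the running count sequence as a closed lattice walk and reduce the maximization to a transparent linear program. Reading the $LR$-sequence clockwise, each $L$ raises the running count by $1$ and each $R$ lowers it by $1$, and the running count records the height \emph{after} each step; since there are $n$ left and $n$ right endpoints, the resulting sequence $[r_1,\dots,r_{2n}]$ is a closed walk (cyclically $r_{2n}$ precedes $r_1$) taking $n$ up-steps and $n$ down-steps, with all heights in $\{m,m+1,\dots,M\}$ and both extremes attained. Because the running count sum is independent of the starting point, I would start the sequence just after a point of minimum agreement, so that the walk begins by climbing out of height $m$.

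For each level $v$ with $m+1 \le v \le M$, let $s_v$ denote the number of up-steps that cross from height $v-1$ to height $v$. Since the walk is closed, each such up-crossing is matched by a down-crossing from $v$ to $v-1$, and since the walk attains both $m$ and $M$ it must cross every intermediate gap, so $s_v \ge 1$. Counting all up-steps gives the constraint $\sum_{v=m+1}^{M} s_v = n$. The crucial bookkeeping step is to observe that a height $v$ is recorded exactly when the walk lands on $v$, either from below (an up-crossing of gap $v$) or from above (a down-crossing of gap $v+1$); hence the number of entries equal to $v$ is $c_v = s_v + s_{v+1}$, with the conventions $s_m = s_{M+1} = 0$. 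Substituting into $C = \sum_{v=m}^{M} v\,c_v$ and re-indexing collapses the sum to
\[
  C = \sum_{v=m+1}^{M} (2v-1)\, s_v ,
\]
so that $C$ is a linear function of the crossing counts subject to $s_v \ge 1$ and $\sum_v s_v = n$.

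The optimization is now immediate. Since the coefficient $2v-1$ is increasing in $v$, the sum is maximized by keeping every $s_v$ at its floor of $1$ except for the largest index, where the remaining mass is concentrated: $s_v = 1$ for $m+1 \le v \le M-1$ and $s_M = n-(M-m)+1$. Reading off $c_v = s_v + s_{v+1}$ reproduces the claimed sequence exactly (a single ascent $m{+}1,\dots,M$, a plateau of $M{-}1,M$ oscillations supplying the surplus crossings of the top gap, and a single descent back to $m$), and plugging the optimal $(s_v)$ into the displayed formula, together with the identity $\sum_{v=m+1}^{M}(2v-1) = M^2 - m^2$, yields
\[
  C_{max} = (M+m)(M-m) + (2M-1)(n-M+m).
\]
Feasibility of the optimum is guaranteed because $M \le n$ and $m \ge 0$ force $s_M = n-M+m+1 \ge 1$.

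The step I expect to require the most care is the bookkeeping identity $c_v = s_v + s_{v+1}$ and the associated claim that every admissible crossing vector $(s_v)$ is actually realized by some collection $\A$ satisfying conditions $M,m,n$. The first must be argued cleanly against the cyclic ``height-after-step'' convention (checking the boundary levels $v=m$ and $v=M$ separately), and the second requires noting that the reconstructed $LR$-sequence, having $n$ symbols of each type and staying within $[m,M]$ while touching both ends, can be realized as circular arcs by pairing each $L$ with a later $R$; this legitimizes treating the problem as the clean linear program above rather than merely an upper bound.
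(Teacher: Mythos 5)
Your crossing-count argument is correct in its main thrust but takes a genuinely different route from the paper. The paper's proof is greedy and pointwise: fixing $r_{2n}=m$, it argues that each entry of an arbitrary running count sequence is bounded by the corresponding entry of the claimed sequence and sums (this implicitly uses that $r_i$ and $r'_i$ have the same parity, namely $m+i \bmod 2$, so that $r_i \le M-1$ wherever $r'_i = M-1$; the paper glosses over this). Your reformulation --- writing $C=\sum_{v=m+1}^{M}(2v-1)s_v$ in terms of level-crossing counts subject to $s_v\ge 1$ and $\sum_v s_v = n$, then solving the linear program by monotonicity of the coefficients --- is a cleaner and arguably more rigorous way to get the upper bound $C\le C_{max}$: it needs no parity bookkeeping and no pointwise domination. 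I checked the identity $c_v=s_v+s_{v+1}$ (with $s_m=s_{M+1}=0$), the collapsed sum, the optimal vector, and the evaluation via $\sum_{v=m+1}^{M}(2v-1)=M^2-m^2$; all are correct, and the optimal crossing vector does decode to the claimed sequence.

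However, the realizability step you flag at the end contains a genuine error when $m>0$. Pairing each $L$ with a \emph{later} $R$ produces a collection in which no arc wraps around through the starting region, and the number of arcs covering that region equals the number of $L$'s matched to \emph{earlier} $R$'s; so with your matching the agreement just before the first symbol is $0$, not $m$. The realized running counts are then the desired heights shifted down by $m$: the collection has minimum agreement $0$ and maximum agreement $M-m$, hence does not satisfy conditions $M,m,n$, and its running count sequence is not the claimed one. The repair is to require exactly $m$ wrapping arcs: in the sequence $L^{M-m}(RL)^{n-M+m}R^{M-m}$, match the last $m$ left endpoints with the first $m$ right endpoints (these are precisely the $A$-type arcs of the paper's construction $\A_{max}$ in Section 5) and match the remaining $L$'s to later $R$'s. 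The paper's own proof of the proposition never addresses realizability at all --- it is deferred to the explicit construction of $\A_{max}$ --- but since your write-up makes realizability an explicit pillar of the argument (turning an upper bound into a maximum), the incorrect matching is a gap you need to fix.
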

\begin{proof}
Without loss of generality, suppose that the minimum agreement number is the last number in a running count sequence $[r_i]$ (i.e. $r_{2n}=m$).  We maximize the running count sum by maximizing the value of each position in the sequence.  The highest possible value of $r_1$ is $m+1$, the highest possible value of $r_2$ is $m+2$, and so on until we get $r_{M-m}=M$.  Since the maximum agreement number is $M$, and the running count must increase or decrease by 1 at each endpoint, the running counts must alternate between $M$ and $M-1$ until the last $M-m$ positions in the sequence.  The running counts here must decrease by 1 at each position so that $r_{2n}=m$, the minimum agreement number.  This shows the maximal running count sequence and $C_{max}$ are as above.
\end{proof}

There are other maximal running count sequences, but they only vary based on the placement of the minimum agreement number $m$.  Since the minimum agreement number occurs just once in each sequence, we adopt the convention that it must occur in the last position and from here on refer to the sequence described in Proposition~\ref{maxlr} as \emph{the} maximal running count sequence.

Likewise, \emph{the} maximal $LR$-sequence is the $LR$-sequence which is derived from the maximal running count sequence.  This is possible because the running count sequence increases by 1 at left endpoints and decreases by 1 at right endpoints.  From Proposition~\ref{maxlr}, we have that the maximal $LR$-sequence will begin with $M-m$ many $L$'s, then the sequence $RL$ repeated $n-M+m$ times, then finally $M-m$ many $R$.  For example, if we fix $n=7, M=5,$ and $m=2$, the maximal running count sequence is $$[3,4,5,4,5,4,5,4,5,4,5,4,3,2],$$ and its $LR$-sequence is $$[L,L,L,R,L,R,L,R,L,R,L,R,R,R].$$

%\begin{proposition}\label{maxlr}
%Fix parameters $M,m$, and $n$.  Then the maximal $LR$-sequence is as follows:  The $LR$-sequence will begin with $M-m$ many $L$'s, then the sequence $RL$ repeated $n-M+m$ times, then finally $M-m$ many $R$.
%\end{proposition}
%\begin{proof}  
%Note that the proposed sequence has $n$ $L$'s and $n$ $R$'s.  We now show that it has the maximum running count at every point.  By definition the agreement number to the left of the first $L$ is $m$, the desired minimum agreement number.  The running count for this sequence then increases to $M$, alternates between $M$ and $M-1$ for as long as possible, then returns back to $m$ at the end.  Since this sequence has the greatest possible running count at every point, it has the largest running count sum.  \end{proof}
%In fact, the running count sum for this $LR$-sequence will be
%\[C_{max} = (M+m)(M-m) + (2M-1)(n-M+m).\]

The following technical lemma will allow us to restrict our attention to collections of arcs with the maximal running count sequence 
(Theorem \ref{maxatmax}).

\begin{lemma}\label{maxEdgesDI}
Suppose $\mathcal{A}=\{A_i\}$ is a collection of $n$ arcs with maximum and minimum agreement numbers $M$ and $m$, $e$ edges, $d$ double intersections, and running count sum $C<C_{max}$.  Then, there exists another collection of $n$ arcs $\mathcal{A'}=\{A'_i\}$, also with maximum and minimum agreement numbers $M$ and $m$, with $e'$ edges, $d'$ double intersections, and running count sum $C'$ such that:
\begin{enumerate}
	\item[(i)] For all $i$, $A_i\subseteq A'_i$,
	\item[(ii)] $e\leq e'$,
	\item[(iii)] $d\leq d'$, and
	\item[(iv)] $C<C'$.
\end{enumerate}
\end{lemma}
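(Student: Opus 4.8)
The plan is to reduce the lemma to a single local modification of the $LR$-sequence and then show that such a modification is always available when $C < C_{max}$. Reading around the circle, suppose there is a right endpoint $R_a$ immediately followed (clockwise) by a left endpoint $L_b$, with no other endpoints between them, and let $v$ be the running count recorded between them. I would \emph{swap} this adjacent pair $R_a L_b$ into $L_b R_a$ by sliding the right endpoint $y_a$ of $A_a$ clockwise just past $x_b$, so that $A_a$ grows to some $A_a' \supseteq A_a$ while every other arc is left unchanged; this gives condition (i). Because the only arc that changes only grows, no intersection and no double intersection can be destroyed, so $e \le e'$ and $d \le d'$, giving (ii) and (iii). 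Finally, a direct check (the reverse of Step 1 in the proof of Theorem~\ref{edgeformula}) shows the swap raises exactly the one running count $v$ to $v+2$ and leaves every other running count fixed, so $C' = C + 2 > C$, giving (iv); by Theorem~\ref{edgeformula} the quantity $d+e$ correspondingly increases by exactly $1$.

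The real content is that the swap can be chosen \emph{without disturbing $M$ and $m$}. Since only the single running count at that position changes, rising from $v$ to $v+2$, the maximum agreement number remains $M$ exactly when $v+2 \le M$, i.e.\ $v \le M-2$, and the minimum agreement number remains $m$ exactly when $m$ is still attained elsewhere, i.e.\ when the raised position is not the unique occurrence of $m$ in the running count sequence. So it suffices to locate an adjacent pair $R_a L_b$ whose intervening running count $v$ satisfies $v \le M-2$ and which is not the lone minimum. Note that an adjacent pair $R_a L_b$ is precisely a local minimum (a ``valley'') of the running count sequence, with value $v \in \{m, m+1, \ldots, M-1\}$.

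The main obstacle is showing such a valley always exists when $C < C_{max}$, which I would prove by contraposition. If \emph{no} such valley exists, then every valley has value $M-1$ except possibly a single valley of value $m$, and this lone $m$ must be reached by a strictly monotone descent and left by a strictly monotone ascent (any non-monotonicity would create an intermediate valley of value at most $M-2$, and two valleys at $m$ would give a raisable non-unique minimum). The peaks between consecutive $M-1$ valleys are then forced to equal $M$. Comparing with Proposition~\ref{maxlr}, this describes exactly the maximal running count sequence read cyclically, so $C = C_{max}$. Hence $C < C_{max}$ forces a valley of value $v \le M-2$ that is not the unique minimum, and the swap of the first two paragraphs applies. (The degenerate case $m = M-1$ admits only the oscillating sequence with $C = C_{max}$, so the hypothesis $C < C_{max}$ is vacuous there.) Carrying out this case analysis carefully — in particular verifying that the only configuration admitting no valid swap is the maximal sequence of Proposition~\ref{maxlr} — is where the work lies.
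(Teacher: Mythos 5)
Your proof is correct, and although it uses the same elementary move as the paper, it locates that move by a genuinely different argument. Both proofs improve $\mathcal{A}$ by extending a single arc so that an adjacent pair $RL$ in the $LR$-sequence becomes $LR$: this raises one running count from $v$ to $v+2$, hence $C$ by $2$, and turns one non-intersection into a single intersection or one single intersection into a double one. The paper finds a usable pair by anchoring the sequence at the minimum, comparing it entry by entry with the maximal sequence of Proposition~\ref{maxlr}, taking the first index $j$ where $r_j<r'_j$, and then splitting into cases according to how the $L$'s after $q_j$ pair with preceding $R$'s; preservation of $M$ is argued separately inside each case. You instead characterize intrinsically which valleys can be raised --- value at most $M-2$, and not the unique occurrence of $m$ --- and prove one exists by contraposition: if none does, every valley is $M-1$ except a single excursion to $m$, every peak is squeezed between its neighboring valleys and the global maximum so must equal $M$, and strict monotonicity between consecutive extrema forces the cyclic sequence to be exactly the maximal one, giving $C=C_{max}$. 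Your route buys a cleaner existence argument (no alignment convention, no pairing case analysis) and re-derives, as a by-product, that the maximal sequence is the unique one admitting no improving swap; the paper's route buys a constructive handle on the first deviation from maximality, at the cost of the rather delicate verification of $M$ in its Case 2. One clause of yours should be repaired rather than trusted: it is not true in general that growing an arc cannot destroy a double intersection, since an arc can grow until it swallows its partner, collapsing a double intersection into a single containment. It is true for your move because the extension crosses only the one left endpoint $x_b$, so the only pair whose status changes is $(A_a,A_b)$, and that pair could not have doubly intersected beforehand precisely because $x_b\notin A_a$; note the paper elides the same point when it asserts that (ii) and (iii) are implied by (i), so this is a shared, and easily filled, gap rather than a defect of your approach.
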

We will prove this result by showing that if $C<C_{max}$, we can extend the given arcs to increase $C$ without changing $M$ or $m$.

\begin{proof}

Let $[q_i]$ and $[r_i]$ be the $LR$- and running count sequences of $\mathcal{A}$ respectively.  Without loss of generality, we assume $r_{2n}=m$.  Let the maximal $LR$-sequence be denoted $[q'_i]$ and the maximal running count sequence $[r'_i]$.  Note that $r_i\leq r'_i$ for all $i$.  Since $C<C_{max}$, we choose the smallest $j$ such that $r_j< r'_j$.  For this choice of $j$, $q_j=R$, $q'_j=L$, $r_j+2=r'_j\leq M$, and $r_{j-1}\leq M-1$.  (This is because $r_{j-1}=r'_{j-1}$ and the running count must change by $1$ or $-1$ at each position.)
\begin{figure}[h]
  \begin{center}
     \includegraphics[scale=0.45]{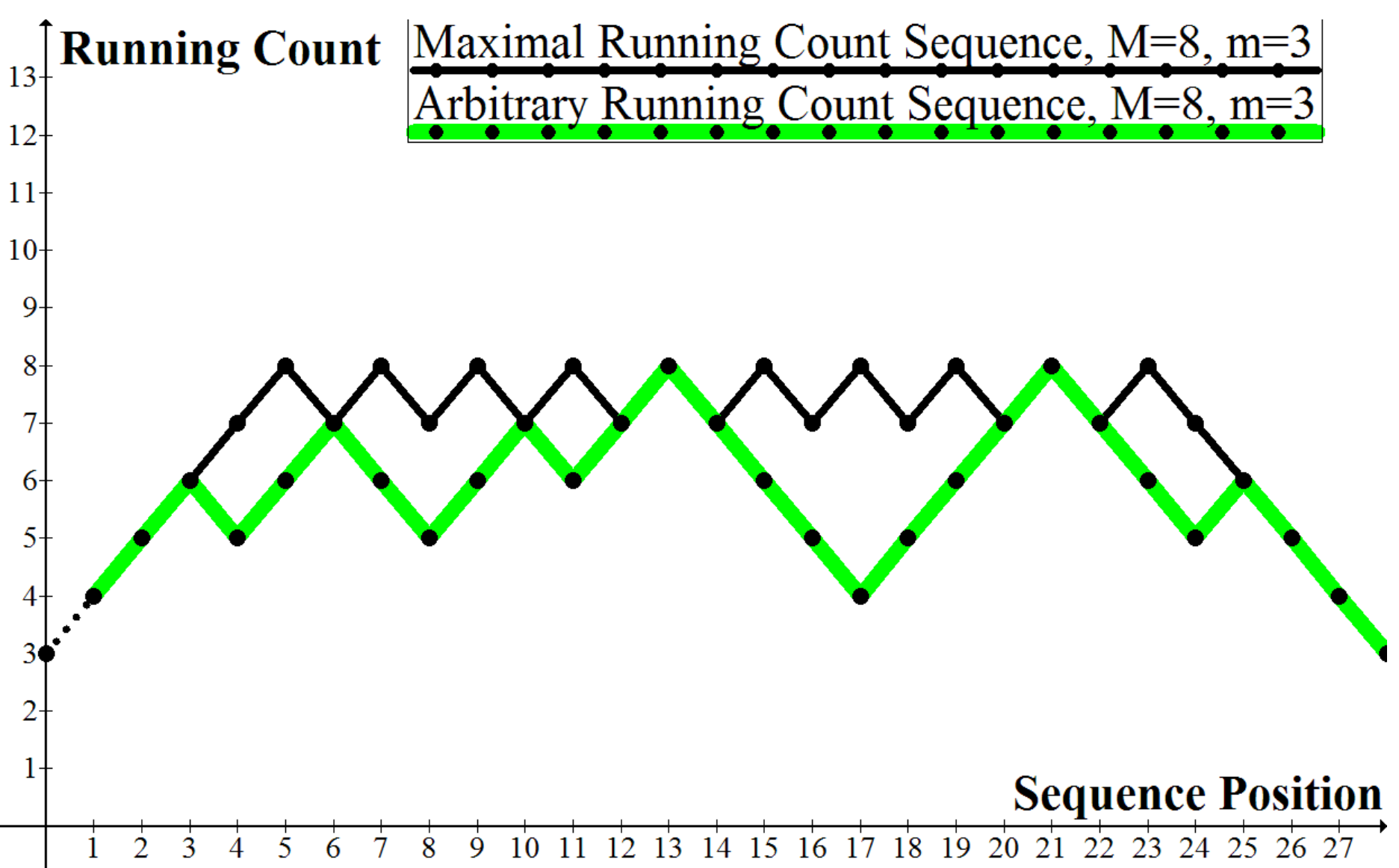}
  \end{center}
  \caption{\small{A graphical representation of the maximal running count sequence with $M=8, m=3, n=14$ and an arbitrary running count sequence with the same conditions.  For the arbitrary sequence, the first $j$ for which $r_j<r'_j$ is $j=4$.}}
  \label{Graph7}
\end{figure}

In order to obtain $\mathcal{A'}$, we will move some endpoint around the circle to extend an arc while maintaining the same minimum and maximum agreement numbers.  Our choice of endpoint depends on the relation between the $L$'s and $R$'s to the right of $q_j$.

Case 1: Suppose that all $L$'s occurring after $q_j$ are paired with an $R$ immediately before them.  We note this implies that there are no instances of consecutive $L$'s in this part of the sequence.  Thus, $r_k\leq r_{j-1}\leq M-1$ for all $k\geq j$.

Which endpoint we move depends on whether $q_{j-1}$ is an $R$ or an $L$.  If $q_{j-1}=R$, then, since $q_j=R$, $q_{j-1}$ is not paired with an $L$ that occurs to the right of it in the $LR$-sequence.  For this arc, move its right endpoint clockwise around the circle until it passes another arc's left endpoint, increasing $C$ by 2.  The maximum agreement number is still $M$ because the running count of each endpoint only increases by 1.

On the other hand, suppose that $q_{j-1}=L$.  Note that $q_j=R$, but $q'_j=L$, so the number of $L$'s at the beginning of $[q_i]$ is less than $M-m$.  Thus, the maximum agreement number $M$ is never reached, so this situation is impossible.

Case 2: Suppose there exists an $L$ occurring after $q_j$ which is not paired with an $R$ immediately before it.  Choose the minimum $k>j$ for which $q_k$ is such an $L$ and call its endpoint on the circle $x$.

If $q_{k-1}=R$, then move $x$ counterclockwise around the circle until it passes the right endpoint corresponding to $q_{k-1}$, say, $y$.  The running counts for $x$ and $y$ both increase by 1, so $C$ increases by 2.

\begin{figure}[h]
  \begin{center}
     \includegraphics[scale=0.45]{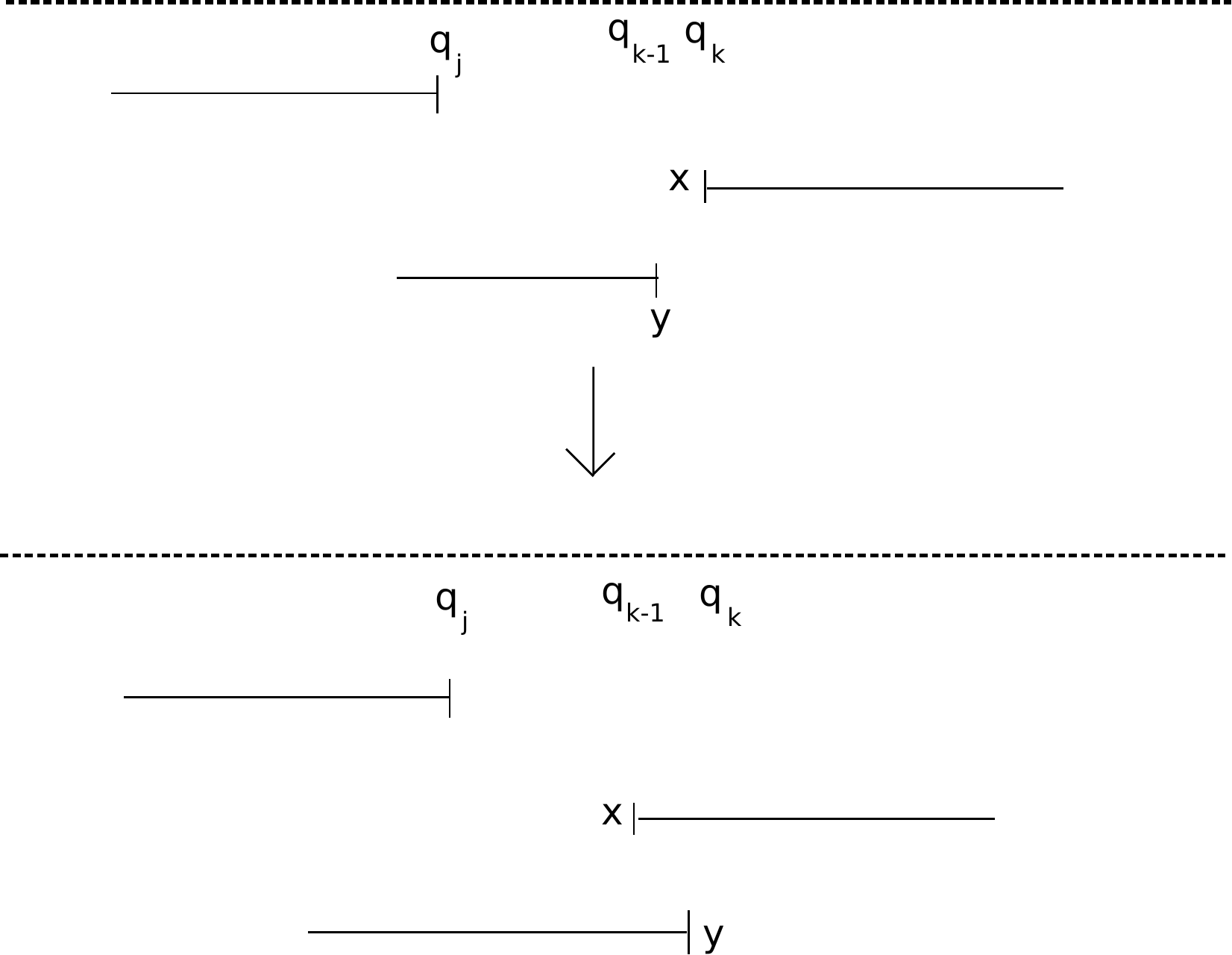}
  \end{center}
  \caption{\small{An illustration of the operation performed in Case 2 when $q_{k-1}=R$.}}
  \label{case2}
\end{figure}

If $q_{k-1}=L$, then it must be paired with $q_{k-2}=R$.  Move $x$ counterclockwise until it passes the right endpoint corresponding to $q_{k-2}$.  Again, the running counts for the two endpoints each increase by 1, so $C$ increases by 2.

For both possible $q_{k-1}$'s, it can be verified that $M$ is still the maximum agreement number because of the condition that $k$ is minimum, and that all $L$'s in between $q_j$ and $q_k$ are paired with adjacent $R$'s.

In all cases, we are only extending arcs.  Thus, condition $(i)$ is established, and $(ii)$ and $(iii)$ are implied by $(i)$. The minimum agreement number is still $m$ because no arcs are extended through the point of minimum agreement on the circle.

Therefore, for any collection of arcs satisfying conditions $M,m$, and $n$, it is possible to construct a new collection of arcs from the old collection satisfying the same conditions.
\end{proof}

\begin{theorem}\label{maxatmax}
Given $M,m$ and $n$, the maximum number of edges obtainable for a collection of arcs can be obtained by a collection of arcs with maximal running count sum.  The maximum number of double intersections is also obtainable by a (possibly different) collection of arcs with the maximal running count sum.
\end{theorem}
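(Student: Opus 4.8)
The plan is to deduce this directly from Lemma~\ref{maxEdgesDI} by an extremal (maximal counterexample) argument, run in two parallel passes: one for edges and one for double intersections. First I would observe that the relevant maxima are actually attained. Over all collections of $n$ arcs satisfying conditions $M,m,n$, the edge count is a nonnegative integer bounded above by $\binom{n}{2}$, so the set of achievable edge counts is finite and possesses a largest element $e_{max}$, realized by some collection. The same boundedness applies to the running count sum $C$, which is a sum of $2n$ integers each lying between $m$ and $M$; hence $C$ also takes only finitely many integer values, all at most $C_{max}$ by Proposition~\ref{maxlr}.

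For the edge statement I would argue as follows. Among all collections satisfying $M,m,n$ whose edge count equals $e_{max}$, choose one, say $\mathcal{A}$, whose running count sum $C$ is as large as possible (possible since $C$ ranges over a finite set of integers). I claim $C = C_{max}$. Suppose instead $C < C_{max}$. Then Lemma~\ref{maxEdgesDI} applies and produces a collection $\mathcal{A}'$ satisfying the same conditions $M,m,n$ with $e' \geq e_{max}$ by condition (ii) and $C' > C$ by condition (iv). Since $e_{max}$ is the largest possible edge count, $e' \leq e_{max}$, forcing $e' = e_{max}$. But then $\mathcal{A}'$ is an edge-maximal collection whose running count sum $C'$ strictly exceeds $C$, contradicting the choice of $\mathcal{A}$. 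Hence $C = C_{max}$, and $\mathcal{A}$ is exactly what is required: it attains the maximum edge count while having maximal running count sum.

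The double intersection statement would be handled by the identical argument, replacing $e$ by $d$ throughout and invoking condition (iii) of Lemma~\ref{maxEdgesDI} in place of (ii). Because the collection maximizing $d$ need not coincide with the one maximizing $e$, the two passes may terminate at different collections, which is precisely why the theorem says \emph{possibly different}. It is worth noting in passing that the Edge Formula (Theorem~\ref{edgeformula}) makes this plausible: for fixed $C$ the quantities $e$ and $d$ satisfy $d+e=\frac{C-n}{2}$, so they trade off against one another, and driving $C$ up to $C_{max}$ is the natural common mechanism for maximizing either one.

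I do not anticipate a genuine obstacle, since all the substantive work has already been carried out in Lemma~\ref{maxEdgesDI}, which guarantees that whenever $C < C_{max}$ we can strictly increase $C$ without decreasing $e$ or $d$. The only points needing care are (a) recording that the suprema are attained, which follows from integrality and boundedness, and (b) setting up the nested extremal choice in the correct order — maximizing the edge (respectively double intersection) count first and the running count sum second — so that the lemma's strict increase in $C$ yields a clean contradiction rather than an endless regress. Equivalently, one could phrase the argument as repeated application of Lemma~\ref{maxEdgesDI}, observing that each step strictly increases the integer $C \leq C_{max}$, so the procedure terminates at $C = C_{max}$ after finitely many iterations.
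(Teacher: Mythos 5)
Your proposal is correct and follows essentially the same route as the paper: both proofs rest entirely on Lemma~\ref{maxEdgesDI}, using it to push the running count sum up to $C_{max}$ without decreasing the edge (or double intersection) count. Your extremal-choice phrasing and the paper's ``apply the lemma as many times as necessary'' are the same argument in different clothing --- indeed your closing remark about termination via integrality is exactly the paper's proof, made slightly more careful.
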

\begin{proof}
Let $\mathcal{A}$ be a collection of arcs with the maximum number of edges and let its running count sum be $C$.  If $C=C_{max}$, then we are done.  If $C<C_{max}$, then apply Lemma~\ref{maxEdgesDI} as many times as necessary to obtain a new collection of arcs $\mathcal{A'}$ with running count sum $C_{max}$.  By Lemma~\ref{maxEdgesDI}, $\mathcal{A'}$ must have at least as many edges in its intersection graph as $\mathcal{A}$.  Therefore, $\mathcal{A'}$ has the maximum number of edges, completing the proof.  The proof for double intersections is analogous.
\end{proof}

\section{Arcs with maximal edge count and minimal double intersections}

The preceding section showed that for fixed $M,m,n$, there existed a collection of arcs with the maximal $LR$-sequence that had the maximum number of edges. We now provide such a collection, $\A_{max}$, and show that $\A_{max}$ has the greatest edge count for collections of arcs satisfying $M,m,n$.

Using this collection of arcs we will show in Theorem~\ref{discreteEdgeCount} that if a collection of $n$ arcs $\mathcal{A}$ has at least $e_{min}(M,m,n)$ edges, and every point on the circle is covered by at least $m$ arcs, then some point is covered by at least $M$ arcs.  A continuous result also holds for large $n$, where $m=\gamma M$ and $M=\beta n$ (Theorem~\ref{continuousEdgeCount}).

%Since fixing the parameters $M,m$ and $n$ determines the $LR$-sequence (the maximum $LR$-sequence), we can place the endpoints around the circle.  By pairing the endpoints, we completely determine the collection of arcs, from which follow the numbers of edges and double intersections.

We now define the collections of arcs with the maximum edge count for a fixed $M,m,n$:

\begin{defn}
We construct a collection of arcs, $\mathcal{A}_{max}$, for fixed $M,m,n$ with the maximal $LR$-sequence.  Number the $L$'s $L_1,\ldots,L_n$ from left to right, and do the same for the $R$'s.  Assign the endpoints to two classes of arcs: $A = \{A_1,\ldots,A_m\}$ and $B = \{B_{m+1},\ldots,B_n\}$, where $A_i$ has endpoints $R_i$ and $L_{n-m+i}$, and $B_{m+i}$ has endpoints $L_i$ and $R_{m+i}$.  (See Figure~\ref{myintervalsfig}).
\end{defn}
We will refer to the arcs in class $A$ as A-type arcs, and the arcs in class $B$ as B-type arcs.  In a general collection of arcs $\A$ satisfying $M,m,n$, A-type arcs will be those whose left endpoints occur to the right of their right endpoints in an $LR$-sequence ending with a running count of $m$, and the other arcs will be B-type arcs.  Note that the A-type arcs are the $m$ arcs that intersect at a point of minimum agreement between the last member of the $LR$-sequence and the first (the point of ``wrap-around'' in Fig.~\ref{myintervalsfig}).

It is of some interest that no arc in $\A_{max}$ properly contains another; its graph is therefore known as a \emph{proper circular arc graph}.  Collections with this property are also used in \cite{hardin}.
\begin{theorem}\label{myintervals}
This collection of arcs $\mathcal{A}_{max}$ maximizes the edge count among all collections of arcs satisfying conditions $M,m,n$.
\end{theorem}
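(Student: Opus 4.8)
By Theorem~\ref{maxatmax}, it suffices to show that $\mathcal{A}_{max}$ has the maximum edge count among all collections with the maximal running count sum $C_{max}$. The key tool is the Edge Formula (Theorem~\ref{edgeformula}), which gives $d + e = \frac{C-n}{2}$. Since every collection with running count sum $C_{max}$ yields the same value of $d+e$, maximizing the edge count $e$ is equivalent to \emph{minimizing} the number of double intersections $d$. So the plan is to prove two things: first, compute $e$ for $\mathcal{A}_{max}$ directly (or equivalently compute its $d$ and invoke the Edge Formula); and second, show that no collection with the maximal $LR$-sequence can have fewer double intersections than $\mathcal{A}_{max}$ does.

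First I would verify that $\mathcal{A}_{max}$ genuinely realizes the maximal $LR$-sequence with the correct parameters $M, m, n$: the A-type arcs (those indexed $A_1,\dots,A_m$ with endpoints $R_i, L_{n-m+i}$) are precisely the $m$ arcs covering the wrap-around point of minimum agreement, and the B-type arcs account for the remaining structure. I would confirm that reading endpoints clockwise reproduces the sequence of $M-m$ leading $L$'s, then $RL$ repeated $n-M+m$ times, then $M-m$ trailing $R$'s, and that the maximum and minimum agreements are $M$ and $m$. This pins down the combinatorial object.

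The heart of the argument is the minimization of double intersections. A double intersection occurs exactly when two arcs each contain both endpoints of the other, i.e., when their union covers the circle. The construction $\mathcal{A}_{max}$ is engineered so that no arc properly contains another (it is a proper circular arc graph), and I would argue that this ``properness,'' together with the spread-out assignment of endpoints, forces the double intersections to be as few as possible. Concretely, I would count the double intersections of $\mathcal{A}_{max}$ explicitly — these arise only among arcs long enough to wrap substantially around the circle, and the pairing of the $L$'s and $R$'s in the definition controls exactly which pairs doubly intersect. Then, given any other collection $\mathcal{A}'$ with the maximal $LR$-sequence, I would show its double intersection count is at least this value, presumably by an exchange or counting argument tying the number of doubly intersecting pairs to the number of arcs spanning a given portion of the circle (a quantity fixed by the running count sequence).

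The main obstacle I anticipate is this lower bound on double intersections for \emph{arbitrary} collections sharing the maximal $LR$-sequence. The $LR$-sequence fixes the cyclic order of endpoints but not the pairing of left endpoints to right endpoints into arcs, and different pairings can change which arcs doubly intersect. I would need a clean invariant — likely something counting, for each position, how many arcs must ``wrap around'' past it — that is determined by the running count sum alone and bounds $d$ from below, with $\mathcal{A}_{max}$ achieving the bound. Establishing that this minimum is attained precisely by the proper (containment-free) pairing of $\mathcal{A}_{max}$ is the crux; once it is in hand, the Edge Formula converts minimal $d$ into maximal $e$, and Theorem~\ref{maxatmax} extends maximality from the maximal-$LR$ case to all collections satisfying $M,m,n$.
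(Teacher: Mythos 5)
Your high-level route is the same as the paper's: invoke Theorem~\ref{maxatmax} to restrict to collections with the maximal running count sum, use the Edge Formula (Theorem~\ref{edgeformula}) to turn edge maximization into double-intersection minimization, compute $d$ for $\mathcal{A}_{max}$, and then bound $d$ from below for every competing collection. However, the step you yourself flag as the crux --- the lower bound on double intersections for an arbitrary collection sharing the maximal $LR$-sequence --- is exactly what you do not supply, and it is the real content of the proof. The paper supplies it as Theorem~\ref{DIformula}: for any collection satisfying $M,m,n$, letting $A_1,\dots,A_m$ be the A-type arcs (those covering the point of minimum agreement), $l_{A_i}$ the number of left endpoints to the left of $R_{A_i}$, and $r_{A_i}$ the number of right endpoints to the left of $L_{A_i}$, one has
\[
d_{\mathcal{A}} \;\ge\; \sum_{i=1}^m \left(l_{A_i} - r_{A_i} + m\right).
\]
The proof is a pairing count: a B-type arc doubly intersects $A_i$ exactly when its left endpoint lies left of $R_{A_i}$ and its right endpoint lies right of $L_{A_i}$; at most $r_{A_i}$ of the $l_{A_i}$ candidate left endpoints can be paired with right endpoints lying left of $L_{A_i}$, so at least $l_{A_i}-r_{A_i}$ B-type arcs doubly intersect $A_i$, and a correction term $+m$ accounts for A-type right endpoints among those $r_{A_i}$.

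Moreover, the invariant you hope to find --- one ``determined by the running count sum alone'' --- cannot exist in that form. The $LR$-sequence fixes only the cyclic order of endpoints, not their pairing into arcs, and different pairings realizing the same maximal $LR$-sequence have genuinely different numbers of double intersections: compare $\mathcal{A}_{max}$ (Theorem~\ref{dmin}, with $d = m(2M-n-1)$ or $0$) against the collection constructed in Theorem~\ref{dmax}, which also has the maximal running count sequence but attains the \emph{maximum} $d = m(M-1)-{m \choose 2}$. So any workable bound must depend on the pairing, and the argument then needs a second step your sketch does not anticipate: one must show that $\mathcal{A}_{max}$ both (a) achieves equality in the bound of Theorem~\ref{DIformula}, since its double intersection count from Theorem~\ref{dmin} equals $\sum_{i=1}^m (l_{A_i}-r_{A_i}+m) = m(2M-n-1)$, and (b) minimizes that right-hand side over all pairings, because its A-type right endpoints are the $m$ leftmost right endpoints (minimizing $\sum l_{A_i}$) and its A-type left endpoints are the $m$ rightmost left endpoints (maximizing $\sum r_{A_i}$). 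Chaining $d_{\mathcal{A}'} \ge B(\mathcal{A}') \ge B(\mathcal{A}_{max}) = d_{\mathcal{A}_{max}}$ then finishes the proof. Finally, your suggestion that ``properness'' (no arc containing another) is the mechanism forcing minimal $d$ is a red herring: the paper mentions properness only as a side observation, and it plays no role in the argument.
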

\begin{figure}[h]
  \begin{center}
     \includegraphics{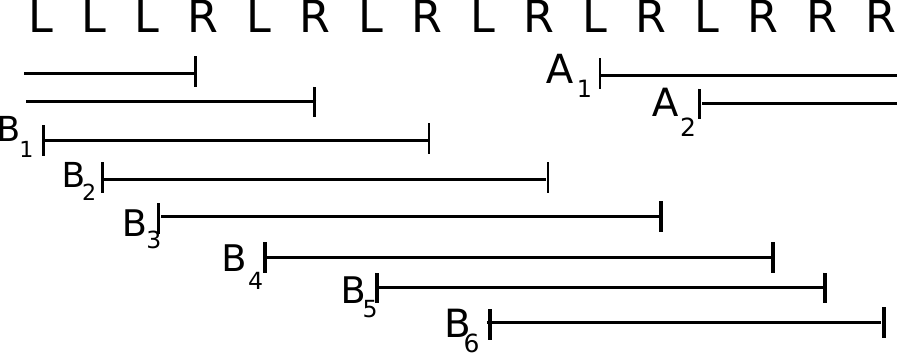}
  \end{center}
  \caption{$\mathcal{A}_{max}$ for $n=8,M=5,m=2$.  The $A_i$ intervals ``wrap around'' the other side of the circle in this $LR$-sequence, and there are $m=2$ of them.  Note that there are two double intersections: $A_1$ with $B_3$, and $A_2$ with $B_4$.}
  \label{myintervalsfig}
\end{figure}

We begin our proof of Theorem~\ref{myintervals} by counting the number of double intersections in $\A_{max}$.  We will then show that $\A_{max}$ has the minimum number of double intersections out of all collections of arcs satisfying conditions $M,m,n$ and having the maximum $LR$-sequence.  By Theorem~\ref{edgeformula} and Theorem~\ref{maxatmax} this will show that $\A_{max}$ has the maximum number of edges for all collections of arcs satisfying $M,m,n$.

\subsection{Counting double intersections in $A_{max}$}
Theorem~\ref{alledges} will show that when $M+m\geq n+1$, $\mathcal{A}_{max}$ has all possible edges.  For now we restrict our attention to conditions satisfying $M+m<n+1$.  We count the double intersections as follows.  First, note that B-type arcs can never doubly intersect each other, because doubly intersecting arcs must cover the whole circle, and no B-type arc covers the point of least intersection.

Two A-type arcs $A_i$ and $A_j$ will doubly intersect when the right endpoint of $A_i$ is to the right of the left endpoint of $A_j$ in the $LR$ sequence.  So as long as every A-type right endpoint is to the left of every A-type left endpoint, no members of $A$ will doubly intersect. (See Figure~\ref{myintervalsfig} for an example.)  This occurs when $m+M \le n+1$.  
%Do we want this much explanation?
To see this, consider the sequence of alternating $L$ and $R$ endpoints in the middle of the maximal $LR$-sequence.  It has $n-(M-m)$ $L$ endpoints and the same number of $R$'s.  If the $m$ A-type right endpoints cover more than half of these, then some of them will lie to the right of A-type left endpoints.  So we require that $2m \le n-(M-m)+1$, or $M+m\le n+1$.  Since we only consider $M,m,n$ that meet this requirement, we must only count double intersections between A-type and B-type arcs.

\begin{theorem}\label{dmin}
The collection of arcs $\mathcal{A}_{max}$, with $M+m\le n+1$, will have $d$ double intersections, where
\[
  d= \left\{
  \begin{array}{l l}
    0 &  \quad 2M \le n+1\\
    m(2M-n-1) & \quad 2M > n+1.\\
  \end{array} \right.
\]
\end{theorem}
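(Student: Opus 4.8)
The plan is to count only the double intersections between an A-type arc and a B-type arc, since the discussion preceding the statement already establishes that no two B-type arcs doubly intersect, and that no two A-type arcs doubly intersect when $M+m\le n+1$. So I would fix an arc $A_i$ (with $1\le i\le m$) and an arc $B_{m+j}$ (with $1\le j\le n-m$) and determine exactly when they doubly intersect.

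First I would record the location of each endpoint in the maximal $LR$-sequence, writing $p(\cdot)$ for the clockwise position $1,\dots,2n$ of an endpoint. Using the description of the maximal $LR$-sequence ($M-m$ initial $L$'s, then $RL$ repeated $n-M+m$ times, then $M-m$ final $R$'s), a short computation gives closed forms for the relevant indices: $p(R_i)=2i+M-m-1$, $p(L_{n-m+i})=2n-m+2i-M$, $p(L_j)=j$ when $j\le M-m$ and $p(L_j)=2j-M+m$ otherwise, and $p(R_{m+j})=2j+m+M-1$ when $j\le n-M$ and $p(R_{m+j})=n+m+j$ otherwise. I would also note that $p(L_{n-m+i})-p(R_i)=2(n-M)+1>0$, so $A_i$ is the arc that covers the wrap-around from $p(L_{n-m+i})$ clockwise to $p(R_i)$, while $B_{m+j}$ is the ordinary arc $[p(L_j),p(R_{m+j})]$.

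The crucial step is to reduce the definition of double intersection to two inequalities. Since $A_i$ wraps around and $B_{m+j}$ does not, I would check that $A_i$ and $B_{m+j}$ doubly intersect if and only if $B_{m+j}$ contains both endpoints of $A_i$, that is,
\[
 p(L_j)\le p(R_i)\qquad\text{and}\qquad p(R_{m+j})\ge p(L_{n-m+i});
\]
the reverse containment (that $A_i$ contains both endpoints of $B_{m+j}$) then follows automatically, since the first inequality places $L_j$ in the segment $[1,p(R_i)]\subseteq A_i$ and the second places $R_{m+j}$ in $[p(L_{n-m+i}),2n]\subseteq A_i$. Substituting the closed forms and simplifying (treating the two cases for $p(L_j)$ and for $p(R_{m+j})$ separately, but verifying that each case collapses to the same bound) turns this pair of conditions into the single chain of inequalities
\[
 n-M-m+i+1 \;\le\; j \;\le\; M-m+i-1 .
\]

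Finally I would count. For each fixed $i$ the number of admissible $j$ is $(M-m+i-1)-(n-M-m+i+1)+1 = 2M-n-1$ whenever this quantity is positive, and there are none otherwise; moreover the hypothesis $M+m\le n+1$ guarantees that both bounds lie inside the allowed range $1\le j\le n-m$ for every $i\in\{1,\dots,m\}$, so no solutions are lost at the ends. Summing over $i=1,\dots,m$ then yields $d=0$ when $2M\le n+1$ and $d=m(2M-n-1)$ when $2M>n+1$, as claimed. I expect the main obstacle to be purely bookkeeping: keeping the three blocks of the $LR$-sequence straight while computing positions, confirming that the two case-splits for $p(L_j)$ and $p(R_{m+j})$ genuinely collapse to one clean pair of bounds, and checking that $M+m\le n+1$ keeps the index window $[\,n-M-m+i+1,\;M-m+i-1\,]$ inside $\{1,\dots,n-m\}$ for all $i$.
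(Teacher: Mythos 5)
Your proposal is correct and takes essentially the same approach as the paper's proof: both restrict attention to A-type/B-type double intersections (citing the preceding discussion for why A--A and B--B pairs contribute nothing when $M+m\le n+1$) and count, for each A-type arc $A_i$, the B-type arcs doubly intersecting it using the structure of the maximal $LR$-sequence. Your explicit position formulas and index window $n-M-m+i+1\le j\le M-m+i-1$ are a more detailed rendering of the paper's count $(M-m+i-1)-(n-M-m+i)=2M-n-1$, with the minor advantage that an empty window handles the $2M\le n+1$ case uniformly, where the paper instead dismisses the negative count with a separate remark.
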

\begin{proof}
This result will be achieved by counting the number of double intersections involving each A-type arc.  Since A-type arcs will only doubly intersect B-type arcs, this will not double count any double intersections.

Consider an arc $A_i$ with endpoints corresponding to $L_{n-m+i}$ and $R_i$.  This arc will doubly intersect a B-type arc $B_i$ if $L_{B_i}$ is to the left of $R_{A_i}$ and $R_{B_i}$ is to the right of $L_{A_i}$.  The count of $L$ endpoints of B-type arcs to the left of $L_{n-m+i}$ is $M-m+i-1$.  The number of double intersections involving $A_i$ will be the number of these $L$ endpoints that are paired with an $R$ to the right of $L_{A_i}$.  There are $(n-M-m+i)$ $R$ endpoints of B-type arcs to the left of $L_{A_i}$, so the number of double intersections involving $A_i$ is
\[d_{A_i} = 2M-n-1.\]
Summing over all the $m$ A-type arcs, this gives the double intersection count
\[d=m(2M-n-1).\]
This count is negative for $2M \le n+1$ because no double intersections will occur in that case.
\end{proof}

\subsection{$\A_{max}$ has the maximal edge count.}
We will now show that $\A_{max}$ has the minimum number of double intersections satisfying conditions $M,m,n$ with the maximal $LR$-sequence.  We will prove a formula in Theorem \ref{DIformula} that, given a collection of arcs $\A$, gives a lower bound for the number of double intersections in $\A$.  We will then show in the proof of Theorem \ref{myintervals} that $\A_{max}$
\begin{enumerate}
\item achieves this lower bound, and
\item minimizes the value of the lower bound among families of arcs satisfying conditions $M,m,n$ and the maximal $LR$-sequence.
\end{enumerate}

\begin{defn}
Let $A_i$ denote one of the A-type arcs in $\A$, with endpoints $L_{A_i}$ and $R_{A_i}$.  Then let $l_{A_i}$ denote the number of left endpoints to the left of $R_{A_i}$ in the $LR$-sequence and let $r_{A_i}$ denote the number of right endpoints to the left of $L_{A_i}$.
\end{defn}
For example, in Figure~\ref{myintervalsfig}, $l_{A_1} = 3$ and $r_{A_1} = 4$.

\begin{theorem}\label{DIformula}
Let $\A$ be a collection of arcs satisfying conditions $M,m,n$, and say that $d_{\A}$ is the number of double intersections in $\A$.  Then,
\[d_{\A} \ge \sum_{i=1}^m (l_{A_i} - r_{A_i} + m).\]
\end{theorem}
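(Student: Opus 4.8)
The plan is to count double intersections one A-type arc at a time and then add up, with the ``$+m$'' in each summand coming from the $m$ A-type arcs themselves. Fix the $LR$-sequence so that it ends with running count $m$, so that the A-type arcs are exactly the $m$ arcs covering the wrap-around point. The first reduction is that every double intersection involves an A-type arc: as observed before Theorem~\ref{dmin}, two B-type arcs can never doubly intersect, since a double intersection forces its two arcs to cover the entire circle, while no B-type arc covers the wrap-around point. Hence it suffices to understand, for each A-type arc $A_i$, which arcs doubly intersect it.

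The geometric input is a description of the arcs doubly intersecting a fixed A-type arc $A_i$ in terms of endpoint positions. Since $A_i$ is A-type, it covers all of the circle except the arc running clockwise from $R_{A_i}$ to $L_{A_i}$. I would check that a B-type arc $X$ doubly intersects $A_i$ exactly when $L_X$ lies to the left of $R_{A_i}$ and $R_X$ lies to the right of $L_{A_i}$ in the $LR$-sequence: these two conditions are precisely what is needed for $X$ to contain both $R_{A_i}$ and $L_{A_i}$, for $A_i$ to contain both endpoints of $X$, and for the two arcs together to sweep out the whole circle. Note that any such $X$ is automatically B-type, since its left endpoint then precedes its right endpoint.

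The count is then inclusion--exclusion on two sets. Let $S$ be the arcs whose left endpoint precedes $R_{A_i}$, so $|S|=l_{A_i}$, and let $T$ be the arcs whose right endpoint precedes $L_{A_i}$, so $|T|=r_{A_i}$. By the previous paragraph the B-type arcs doubly intersecting $A_i$ are those in $S\setminus T$, whose number is $|S|-|S\cap T|=l_{A_i}-r_{A_i}+|T\setminus S|$. The decisive observation is that each of the $m$ A-type arcs lies in $T\setminus S$: an A-type arc $A_k$ has its right endpoint to the left of $L_{A_i}$ (so $A_k\in T$) and its left endpoint to the right of $R_{A_i}$ (so $A_k\notin S$). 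This forces $|T\setminus S|\ge m$, whence the number of double intersections involving $A_i$ is at least $l_{A_i}-r_{A_i}+m$. Summing over the $m$ A-type arcs, and using that each double intersection is attached to a unique A-type arc so that nothing is counted twice, gives $d_{\A}\ge\sum_{i=1}^m(l_{A_i}-r_{A_i}+m)$.

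The step I expect to be the main obstacle is the bookkeeping around double intersections between two A-type arcs, which is exactly where the two closing claims above can break down: that every A-type $A_k$ satisfies $R_{A_k}$ left of $L_{A_i}$ and $L_{A_k}$ right of $R_{A_i}$ (so that all $m$ of them land in $T\setminus S$), and that the per-arc counts may be summed with no double-counting. Both reduce to the same non-crossing statement, namely that the $m$ A-type arcs pairwise fail to doubly intersect one another, equivalently that their ``position-intervals'' running from right endpoint to left endpoint pairwise overlap. This is precisely the phenomenon isolated for $\A_{max}$ in the paragraphs preceding Theorem~\ref{dmin}, where it was tied to the hypothesis $m+M\le n+1$ (the complementary range being absorbed into the all-edges case). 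Establishing this non-crossing property for the A-type arcs of a general family --- thereby securing both the ``$+m$'' and the clean summation --- is the heart of the argument; once it is in hand, the counting above closes immediately.
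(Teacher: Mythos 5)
Your framework coincides with the paper's: the paper also counts double intersections one A-type arc at a time, uses the same endpoint characterization of the B-type arcs doubly intersecting $A_i$ (your $S\setminus T$), and obtains the bound as $l_{A_i}-r_{A_i}$ plus a correction of $m$; your inclusion--exclusion identity $|S\setminus T|=l_{A_i}-r_{A_i}+|T\setminus S|$ is a cleaner rendering of that count. The gap is exactly the step you defer to the end: the ``non-crossing property'' that the A-type arcs pairwise never doubly intersect, equivalently that all $m$ of them land in $T\setminus S$. This is not a missing lemma waiting to be proved; it is false for general families satisfying conditions $M,m,n$, even with $M+m<n+1$ and even with the maximal $LR$-sequence. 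Concretely, label the circle $0$ to $360$ and take $P$ to be everything except $(115,125)$, $Q$ everything except $(235,245)$, $B_1=[60,180]$, $B_2=[181,300]$, $B_3=[310,350]$. Then $n=5$, $M=3$, $m=2$ (so $M+m<n+1$), and if the $LR$-sequence starts at the minimum-agreement point $355$, the A-type arcs are exactly $P$ and $Q$ --- which doubly intersect, since their complements are disjoint. The discussion before Theorem~\ref{dmin} that you invoke cannot be transplanted: it establishes non-crossing only for $\A_{max}$, using the fact that there the A-type right endpoints are the $m$ leftmost right endpoints and the A-type left endpoints the $m$ rightmost left endpoints, a structural feature your general family $\A$ need not have.

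What the paper does instead --- and this is the idea your plan lacks --- is an exchange argument that tolerates crossing pairs: if an A-type arc $A_j$ has $R_{A_j}$ to the right of $L_{A_i}$, so that $A_j\notin T$ and one unit of the ``$+m$'' correction is lost, the paper notes that $A_j$ then doubly intersects $A_i$ and counts that double intersection in place of the lost unit, keeping the per-arc count at $l_{A_i}-r_{A_i}+m$. That said, your suspicion that the bookkeeping here is the danger point is well founded: in the remaining configuration, where $A_j$'s complement lies entirely to the left of $A_i$'s (so $A_j\in S\cap T$), the compensating double intersection is the same one that $A_j$'s own count claims, and the paper's ``no double counting'' remark breaks down. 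Indeed, in the five-arc example above one computes $l_P-r_P+m=2$ and $l_Q-r_Q+m=2$, so the claimed lower bound is $4$, while $\A$ has only $d_{\A}=3$ double intersections (namely $PQ$, $PB_1$, $QB_2$); the inequality of Theorem~\ref{DIformula} is safe only when no two A-type arcs doubly intersect, which does hold for $\A_{max}$ and is what the application in Theorem~\ref{myintervals} really uses. So your reduction and your diagnosis of the crux are correct, but the proposed resolution --- proving non-crossing --- is a dead end: a correct completion must confront crossing A-type pairs directly, via the exchange argument plus more careful accounting of which arc's count owns each A-to-A double intersection.
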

\begin{proof}
This bound will be shown by counting the number of double intersections involving each A-type arc.  Recall that a B-type arc $B_k$ will doubly intersect $A_i$ if $L_{B_k}$ is to the left of $R_{A_i}$ and $R_{B_k}$ is to the right of $L_{A_i}$ (see arcs $A_1$ and $B_3$ in Figure~\ref{myintervalsfig}).  Then there are $l_{A_i}$ left endpoints that may belong to such a $B_k$. (If any of these left endpoints belongs to an A-type arc then that arc doubly intersects with $A_i$.  We ignore such double intersections for now.)  However, if these endpoints are paired with any of the $r_{A_i}$ right endpoints to the left of $L_{A_i}$ their arcs will not doubly intersect $A_i$.  The figure $l_{A_i} - r_{A_i}$ represents the number of left endpoints that cannot be paired off in this way, so at least that number of B-type arcs must doubly intersect $A_i$.

The number of double intersections involving $A_i$ is still undercounted.  We will show that it is undercounted by at least $m$.  There may be right endpoints counted in the figure $r_{A_i}$ that belong to A-type arcs.  These endpoints cannot be paired with the left endpoints counted in $l_{A_i}$, so they should not be subtracted from the double intersection count.  If no A-type arc $A_j$ doubly intersects $A_i$, then all of the $m$ A-type right endpoints lie to the left of $L_{A_i}$. (This is the case for both A-type arcs in Figure~\ref{myintervalsfig}).  Then the count of double intersections involving $A_i$ should increase by $m$.  Suppose instead that $R_{A_j}$ lies to the right of $L_{A_i}$.  Then although one fewer B-type arc must doubly intersect $A_i$, $A_j$ doubly intersects $A_i$.  Thus the number of double intersections involving $A_i$ does not change, and is still at least $l_{A_i} - r_{A_i} + m$.

Note that this count of double intersections does not count a double intersection between two A-type arcs twice.  The count for arc $A_i$ only counts double intersections with A-type arcs whose right endpoints are to the right of $L_{A_i}$, not those with left endpoints to the left of $R_{A_i}$.

Summing this count over all the A-type arcs gives the formula above.
\end{proof}

\begin{proof}[Proof of Theorem~\ref{myintervals}]
Consider the bound from Theorem	~\ref{DIformula} for $\A_{max}$.  Recall that in $\A_{max}$, the arc $A_i$ has the endpoints $R_i$ and $L_{n-m+i}$.  Then $l_{A_i} = M+m+i-1$ and $r_{A_i} = n-M+i$.  Then
\[\sum_{i=1}^m (l_{A_i} - r_{A_i} + m) = \sum_{i=1}^m (2M-n-1) = m(2M-n-1).\]
This sum is equal to $d_{\A_{max}}$ as shown in Theorem~\ref{dmin}, so the inequality of Theorem~\ref{DIformula} is an equality for $\A_{max}$.

Because the right endpoints of the A-type arcs are the $m$ leftmost right endpoints, $\sum l_{A_i}$ is minimized.  Because the left endpoints of the A-type arcs are the $m$ rightmost, $\sum r_{A_i}$ is maximized.  So the difference of these sums must be minimized, minimizing the right-hand side of Theorem~\ref{DIformula}.

Since $\A_{max}$ achieves the lower bound in Theorem~\ref{DIformula}, it has the minimum number of double intersections of all arcs satisfying $M,m,n$ with the maximal $LR$-sequence, and thus has the maximum number of edges of all arcs satisfying $M,m,n$.
\end{proof}

\subsection{The minimum edge count to guarantee agreement M}
Using Theorem~\ref{dmin}, we can state exactly the maximum number of edges of a collection of $n$ circular arcs with maximum and minimum agreement numbers $M$ and $m$.  Recall from the Edge Formula that
\[d+e = \frac{C- n}{2}.\]
Since $C$, $n$ and the minimum value for $d$ are known, the maximum edge count can be calculated.  This yields the following theorems: 

\begin{theorem}\label{alledges}
The collection of arcs $\mathcal{A}_{max}$ for $M,m,n$ where $M+m\geq n+1$ has $n\choose 2$ edges; that is, all possible edges are present.
\end{theorem}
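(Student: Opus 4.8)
The plan is to show directly that when $M+m \ge n+1$, every pair of arcs in $\mathcal{A}_{max}$ intersects, so that its intersection graph is the complete graph and hence has exactly $\binom{n}{2}$ edges. Since each arc is A-type or B-type, I would organize the verification into three cases, A--A, B--B, and A--B, and show in each that the two arcs cannot be disjoint. The A--A case is immediate: as noted just after the definition of $\mathcal{A}_{max}$, every A-type arc contains the point of minimum agreement (the wrap-around point), so any two A-type arcs already meet there, with no hypothesis on $M+m$ needed.

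For the other two cases I would translate ``disjoint'' into inequalities among the positions of endpoints in the maximal $LR$-sequence. Reading the sequence as $M-m$ many $L$'s, then $RL$ repeated $n-M+m$ times, then $M-m$ many $R$'s, one records explicit position formulas for each $L_i$ and $R_j$, with a split according to whether the endpoint lies in the initial/final block or in the middle $RL$ portion. Two B-type arcs $B_{m+i}=[L_i,R_{m+i}]$ and $B_{m+j}=[L_j,R_{m+j}]$ with $i<j$ (neither wrapping around) are disjoint exactly when $R_{m+i}$ precedes $L_j$; substituting the position formulas in the binding (middle) range reduces this to $j-i \ge M$, which is impossible because $j-i \le (n-m)-1$ while $M+m\ge n+1$ forces $n-m \le M-1$.

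For an A-type arc $A_i=[L_{n-m+i},R_i]$, which wraps around, and a B-type arc $B_{m+j}$, disjointness means that $B_{m+j}$ lies entirely in the gap of $A_i$, i.e. that $R_i$ precedes $L_j$ and $R_{m+j}$ precedes $L_{n-m+i}$. Running through the sub-cases for where $L_j$ and $R_{m+j}$ fall, I would show each is contradictory: when $L_j$ lies in the initial block the first inequality already fails; in the genuinely middle range the two inequalities force $M-m \le j-i \le n-M-m$, hence $2M \le n$; and when $R_{m+j}$ lies in the final block they force $i>m$. The hypothesis supplies $2M \ge M+m \ge n+1 > n$ (using $m\le M$) and the construction gives $i\le m$, so both of these last possibilities are excluded. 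I would present these as short computations rather than belaboring every sub-case.

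The main obstacle is purely the bookkeeping: getting the position formulas right and, in particular, handling the wrap-around of the A-type arcs correctly when deciding whether a B-type arc falls inside the gap of an A-type arc (one should check that this gap is a genuine forward interval, which follows from $M\le n$). The single arithmetic fact that closes every sub-case is $2M>n$, which is exactly what $M+m\ge n+1$ buys via $m\le M$; once the endpoint positions are tabulated, each verification is routine.
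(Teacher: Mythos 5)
Your proof is correct, but it takes a genuinely different route from the paper's. The paper argues by induction on $m$: it first notes that in the boundary case $M+m=n+1$ the edge count of $\mathcal{A}_{max}$, obtained from Theorem~\ref{discreteEdgeCount} together with the Edge Formula (Theorem~\ref{edgeformula}), already equals ${n \choose 2}$; then, as $m$ is increased by $1$, one B-type arc is replaced by an A-type arc that properly contains it, so no edges are lost. You instead verify completeness of the intersection graph directly, pair by pair, from explicit endpoint positions in the maximal $LR$-sequence, split into the A--A, B--B, and A--B cases. Your key reductions check out: the A--A case is free since all A-type arcs share the wrap-around point; in the middle range, B--B disjointness forces $j-i\ge M$ against $j-i\le (n-m)-1\le M-2$; A--B disjointness forces $M-m\le j-i\le n-M-m$, hence $2M\le n$, contradicting $2M\ge M+m\ge n+1$; and the initial/final-block sub-cases fail exactly as you say (the first inequality failing outright, or forcing $i>m$). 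What your route buys is self-containedness: it avoids the paper's forward reference to Theorem~\ref{discreteEdgeCount} (which is stated only after this theorem), does not invoke the Edge Formula or any double-intersection count, and makes transparent precisely where the hypothesis $M+m\ge n+1$ enters (through $2M>n$). What the paper's route buys is brevity given the machinery already in place --- though at the price of an unproven ``it can be shown'' step (that the new A-type arc contains the B-type arc it replaces), a gap that your explicit bookkeeping effectively fills.
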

\begin{proof}
Observe that when $m+M=n+1$, $\mathcal{A}_{max}$ has $n\choose 2$ edges (by Theorem~\ref{discreteEdgeCount} and the Edge Formula).  When $m$ is increased by 1, another arc in the collection becomes an A-type arc.  It can be shown that this new arc properly contains the B-type arc it replaces, so no intersections between the arcs are lost.  So it may be shown by induction that $\mathcal{A}_{max}$ has $n\choose2$ edges whenever $M+m\geq n+1$.
\end{proof}

We now present one of our main results:
\begin{theorem}\label{discreteEdgeCount}
For any $M,m,n$ where $M+m \le n+1$ the collection $\A_{max}$ has $e_{max}$ edges, and no family of arcs satisfying $M,m,n$ can have more than $e_{max}$ edges. Then:
\[
  e_{max} = \left\{
   \begin{array}{l l}
   \frac{1}{2}(M+2Mn+2Mm-M^2-m^2-m-2n) & \quad 2M \le n+1\\
   \frac{1}{2}(M+2Mn-2Mm-M^2-m^2+2mn+m-2n) & \quad 2M \ge n+1.\\
  \end{array} \right.
\]
Then the minimum edge count needed to guarantee an agreement $M$ with fixed $m$ and $n$ is as follows (when $M+m\le n+1$):
\[
  e_{min} = \left\{
  \begin{array}{l l}
    \frac{3}{2}M+Mn+Mm-\frac{1}{2}M^2 - \frac{3}{2}m - \frac{1}{2}m^2 - 2n &  \quad 2M \le n+3\\
    \frac{3}{2}M + Mn -Mm -\frac{1}{2}M^2 +m +mn - \frac{1}{2}m^2 - 2n & \quad 2M > n+3.\\
  \end{array} \right.
\]
\end{theorem}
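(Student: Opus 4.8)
The plan is to read off every quantity in the statement from the Edge Formula (Theorem~\ref{edgeformula}), feeding it the maximal running count sum $C_{max}$ of Proposition~\ref{maxlr} and the minimal double-intersection count of Theorem~\ref{dmin}. By construction $\A_{max}$ has the maximal $LR$-sequence, so its running count sum is $C_{max}$, and by Theorem~\ref{myintervals} it simultaneously attains the double-intersection count $d$ computed in Theorem~\ref{dmin}. Solving $d+e=\frac{C-n}{2}$ for $e$ gives
\[ e_{max} = \frac{C_{max}-n}{2} - d. \]
First I would substitute $C_{max}=(M+m)(M-m)+(2M-1)(n-M+m)$ and simplify $\frac{C_{max}-n}{2}$; this expression already matches the first displayed case of $e_{max}$. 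In the regime $2M\le n+1$ we have $d=0$ and the first case is finished; in the regime $2M\ge n+1$ I would subtract $d=m(2M-n-1)$ and collect terms to reach the second case. The maximality clause is then immediate: Theorem~\ref{myintervals} already asserts that $\A_{max}$ maximizes the edge count among all collections satisfying $M,m,n$, so the value just computed is genuinely the maximum.

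For the minimum edge count $e_{min}$ the argument is contrapositive. A collection with minimum agreement $m$ and maximum agreement at most $M-1$ has, for each admissible $M'\le M-1$, at most $e_{max}(M',m,n)$ edges; hence the largest edge count consistent with \emph{failing} to reach agreement $M$ is $\max_{M'\le M-1} e_{max}(M',m,n)$, and one more edge than this forces a point covered by $M$ arcs. Thus
\[ e_{min}(M,m,n) = \max_{M'\le M-1} e_{max}(M',m,n) + 1. \]
The crucial observation is that under the standing hypothesis $M+m\le n+1$ we have $M-1\le n-m$, and $e_{max}(\cdot,m,n)$—assembled from its two cases—is increasing on $\{M':M'\le n-m\}$: its continuous extension is a downward parabola peaking at $M'=n-m+\tfrac12$, and the two pieces agree at the boundary $2M'=n+1$ since there $d=m(2M'-n-1)=0$. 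Consequently the maximum over $M'\le M-1$ is attained at $M'=M-1$, giving $e_{min}(M,m,n)=e_{max}(M-1,m,n)+1$. Substituting $M\mapsto M-1$ into the $e_{max}$ formula—so that the split $2M'\le n+1$ becomes $2M\le n+3$—and adding $1$ produces the stated two-case expression for $e_{min}$.

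The main obstacle is precisely this $e_{min}$ step: justifying that it suffices to compare against $M'=M-1$ rather than all smaller agreement numbers. The subtracted term $m(2M'-n-1)$ makes $e_{max}$ non-monotone in $M'$ in general, so one must verify that the constraint $M+m\le n+1$ keeps us strictly to the left of the peak of $e_{max}$, where it is still increasing, and must check continuity at the case boundary so that the piecewise maximum is controlled. The remaining work—expanding $C_{max}$, subtracting $d$, and carrying out the shift $M\mapsto M-1$—is routine algebra whose only delicacy is tracking how the inequalities defining the two cases translate.
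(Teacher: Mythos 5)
Your proposal is correct and follows essentially the same route as the paper: compute $e_{max} = \frac{C_{max}-n}{2} - d$ from the Edge Formula (Theorem~\ref{edgeformula}), Proposition~\ref{maxlr}, and Theorem~\ref{dmin}, invoke Theorem~\ref{myintervals} for maximality, and obtain $e_{min}$ via $e_{min}(M) = e_{max}(M-1)+1$ with the case split $2M\le n+3$ arising from the shift $M\mapsto M-1$. Your explicit verification that $e_{max}(\cdot,m,n)$ is increasing on $\{M' : M'\le n-m\}$ (both pieces increasing there and agreeing at $2M'=n+1$), so that comparison with $M'=M-1$ alone suffices, makes rigorous a step the paper's proof asserts without justification, but it is a refinement of the same argument rather than a different approach.
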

\begin{proof}
The maximality of $\A_{max}$ was shown in Theorem~\ref{myintervals}.  The $e_{max}$ equations come from counting the edges in $\mathcal{A}_{max}$.  The edge formula may be combined with Theorem~\ref{maxlr} and the known number of double intersections in $\mathcal{A}_{max}$ to yield this edge count, which we have shown to be the greatest possible for $M,m,n$.

Note that no family of $n$ arcs with minimum agreement number $m$ can have more than $e_{max}$ edges without having a maximum agreement of at least $M+1$.  Then $$e_{min}(M) = e_{max}(M-1)+1.$$ 
The $e_{min}$ equations are thus derived by substituting $M-1$ for $M$ in the $e_{max}$ equations.  Since $e_{max} = {n\choose2}$ when $M+m=n+1$, there does not exist an $e_{min}$ which can guarantee any greater agreement number in this situation.
\end{proof}

Letting $n \rightarrow \infty$, we obtain this asymptotic result:
\begin{theorem}\label{continuousEdgeCount}
Let $\alpha,\beta,\gamma$ be defined so that $e_{min} = \alpha {n \choose 2}$, $M = \beta n$, and $m = \gamma M$. 
In the limit as $n \rightarrow \infty$ we obtain:
\[
  \alpha = \left\{
  \begin{array}{l l}
    \beta(2-(1-\gamma)^2\beta) &  \quad 0 \le \beta \le \frac{1}{2}\\
    \beta(\gamma+1)(2-\beta(\gamma+1)) & \quad \frac{1}{2} < \beta \le \frac{1}{1+\gamma}.\\
  \end{array} \right.
\]
\end{theorem}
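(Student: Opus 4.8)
The plan is to treat this purely as a limiting computation that extracts the dominant asymptotic behavior of the two $e_{min}$ formulas from Theorem~\ref{discreteEdgeCount}. Since $\alpha$ is defined by $e_{min} = \alpha\binom{n}{2}$ and $\binom{n}{2} = \tfrac{n(n-1)}{2} \sim \tfrac{n^2}{2}$, we have $\alpha = \tfrac{2e_{min}}{n(n-1)}$, so in the limit $\alpha$ equals twice the coefficient of $n^2$ in $e_{min}$. Every term of the $e_{min}$ formulas is a polynomial in $M$ and $m$ of degree at most two; after substituting $M=\beta n$ and $m=\gamma M = \gamma\beta n$, each monomial becomes a constant times a power of $n$, and only the degree-two monomials ($Mn$, $M^2$, $Mm$, $mn$, $m^2$) contribute to the $n^2$ coefficient. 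The linear terms ($\tfrac32 M$, $-\tfrac32 m$, $-2n$, and so on) contribute only at order $o(n^2)$ and vanish in the limit.

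First I would handle the case $2M \le n+3$. Substituting the scalings into $\tfrac32 M + Mn + Mm - \tfrac12 M^2 - \tfrac32 m - \tfrac12 m^2 - 2n$ and discarding everything below order $n^2$, the surviving coefficient of $n^2$ is $\beta + \gamma\beta^2 - \tfrac12\beta^2 - \tfrac12\gamma^2\beta^2$. Doubling and collecting the $\beta^2$ terms gives $2\beta - \beta^2(1 - 2\gamma + \gamma^2) = 2\beta - \beta^2(1-\gamma)^2$, that is, $\alpha = \beta(2 - (1-\gamma)^2\beta)$, which is the first branch. The key simplification is recognizing the perfect square $1-2\gamma+\gamma^2 = (1-\gamma)^2$.

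Next I would repeat the computation for $2M > n+3$ using the second $e_{min}$ formula $\tfrac32 M + Mn - Mm - \tfrac12 M^2 + m + mn - \tfrac12 m^2 - 2n$. Here the order-$n^2$ coefficient is $\beta - \gamma\beta^2 - \tfrac12\beta^2 + \gamma\beta - \tfrac12\gamma^2\beta^2$; doubling and factoring produces $2\beta(1+\gamma) - \beta^2(1+\gamma)^2 = \beta(1+\gamma)(2 - \beta(1+\gamma))$, the second branch, where now the relevant perfect square is $1+2\gamma+\gamma^2 = (1+\gamma)^2$. Finally I would translate the case boundaries: dividing each discrete inequality by $n$ and letting $n\to\infty$ turns $2M \le n+3$ into $\beta \le \tfrac12$, turns $2M > n+3$ into $\beta > \tfrac12$, and turns the standing hypothesis $M+m \le n+1$ (equivalently $\beta(1+\gamma)n \le n+1$) into $\beta \le \tfrac{1}{1+\gamma}$, which supplies the upper endpoint of the domain.

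There is no genuine obstacle here beyond bookkeeping: the result follows by substitution, dropping lower-order terms, and factoring. The only place demanding care is ensuring that each term of $e_{min}$ is classified correctly by its order in $n$ (so that, for example, $Mm$ and $mn$ are retained while $\tfrac32 M$ and $m$ are discarded), and spotting the two perfect-square factorizations that cast the answer in the compact product form stated in the theorem.
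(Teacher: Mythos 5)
Your proposal is correct and is essentially the paper's own (implicit) proof: the paper simply says ``Letting $n \rightarrow \infty$'' after Theorem~\ref{discreteEdgeCount}, and your computation—substituting $M=\beta n$, $m=\gamma\beta n$ into the $e_{min}$ formulas, retaining the order-$n^2$ terms, doubling (since $\binom{n}{2}\sim n^2/2$), and spotting the factorizations $(1-\gamma)^2$ and $(1+\gamma)^2$—fills in exactly that limit, with the case boundaries $2M \le n+3$ and $M+m \le n+1$ correctly passing to $\beta \le \tfrac12$ and $\beta \le \tfrac{1}{1+\gamma}$.
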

See Figure~\ref{Graph5}.  Note that $\alpha$ is the fraction of all possible edges, $\beta$ is the agreement proportion, and $\gamma$ is the ratio of minimum to maximum agreement.  So Theorem~\ref{continuousEdgeCount} shows the minimum fraction of edges $\alpha$ needed to guarantee an agreement proportion $\beta$.

Note that when $\beta = \frac{1}{2}$, the two formulas both give $\alpha = \frac{1}{4}\gamma^2 + \frac{1}{2}\gamma + \frac{3}{4}$, so our function is continuous.  Our formula is defined for $m+M\le n+1$, or in the continuous case, $\beta(1+\gamma) \le 1$.  When $\beta(1+\gamma) = 1$, this gives $\alpha = 1$, since all possible edges must be present to guarantee an agreement ratio of $\beta$.

When $\gamma = 0$, that is when there is a point on the circle that is not part of any arc, the collection of arcs is identical to a collection of intervals on a straight line.  Our formula then becomes 
\[\alpha = \beta(2-\beta),\]
which is equivalent to the result for interval graphs published by Abbott and Katchalski \cite{abbott-katchalski}.  They proved their result using the chordal property of interval graphs to show that an agreement proportion of at least $\beta$ was achievable, and constructed a set of intervals to show that no greater agreement proportion can be guaranteed.  Our methods differ in relying more on the properties of the arcs themselves, and deriving results about the intersection graphs as a corollary.

Our result also allows an upper bound for $e_{min}$ to be calculated when $m$ or $\gamma$ is unknown.  In this case, we can set $m=M-1$ and $\gamma=1$.  The $e_{min}$ will then be greater than or equal to the value calculated for a known $m$ or $\gamma$, and a lower bound on the maximum agreement of $M$ will be guaranteed.  The bound in this case will be $\alpha = 2\beta$ (see Figure~\ref{Graph5}).

\begin{figure}[h]
  \begin{center}
     \includegraphics[scale=0.6]{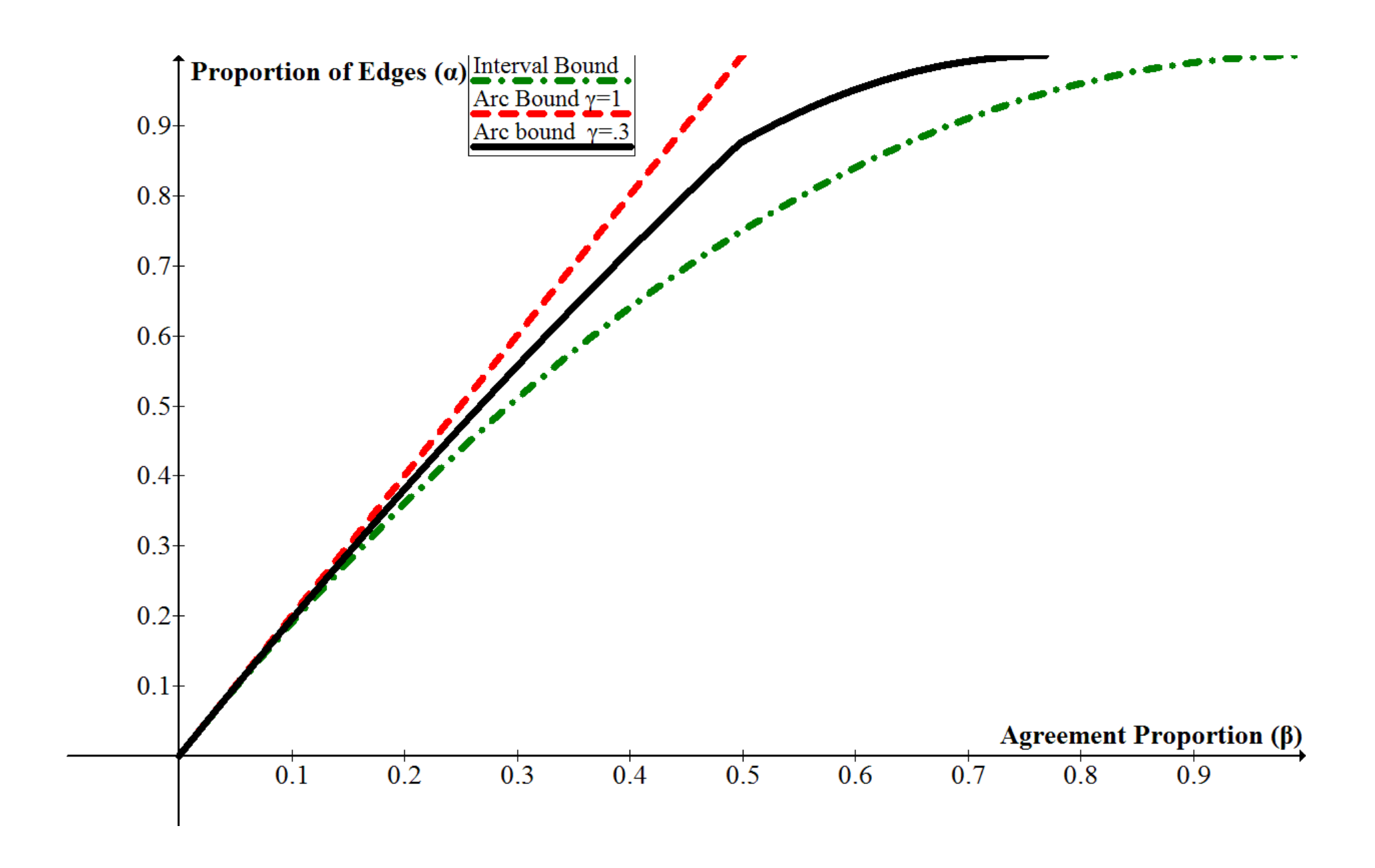}
  \end{center}
  \caption{Theorem~\ref{continuousEdgeCount} illustrated for several values of $\gamma$.} %Note that the most general bound occurs when $\gamma=1$, and the interval bound is recovered when $\gamma=0$.}
  \label{Graph5}
\end{figure}

\section{Characterizing allowable collections by maximizing double intersections}

In the previous section, we derived an expression for the proportion $\alpha$ of edges needed to guarantee an agreement proportion $\beta$ with fixed ratio $\gamma = m/M$.  In this section, we determine $\alpha$ given $\beta$, $\gamma$ and information about $d$, then determine when collections of arcs satisfying these conditions exist.

We describe this information as the proportion $p$ of total possible intersections which are double intersections. That is
\begin{equation}
p=\frac{d}{{n\choose 2}}
\end{equation}
Given $M,m,n$, and $p$ (which defines $d$), it is clear from the Edge Formula that edges will be maximized when the running count is maximized.  However, it is possible that the specific value of $p$ is not realizable by a collection of arcs satisfying conditions $M,m$, and $n$, but for now we will assume that it is, then spend the rest of this section discussing when the value is allowable.

Assuming the running count sum $C_{max}$, the Edge Formula and Proposition~\ref{maxlr} give us:
\begin{equation}
e_{min}=\frac{-M^2+2nM+2mM+M-m^2-2n-m}{2}-p{n\choose2}
\end{equation}
which for asymptotically large $n$, reduces to
\begin{equation}
\alpha=\beta (2-(1-\gamma)^2 \beta)-p
\end{equation}
%Since $p$ is fixed, this bound holds, 
This holds \emph{assuming there exists a collection of arcs which satisfies the conditions $M,m,n,$ and $p$}.  As we saw in the previous section, not all combinations of $M,m,n$, and $d$ are realizable as a collection of arcs.  If we use this equation to plot proportion of edges versus agreement proportion (as in Figure~\ref{Graph6}), then we observe that for small values of $\beta$, fewer edges are required to guarantee a particular agreement proportion for a collection of arcs than for a collection of intervals.
%This result is somewhat counterintuitive in the sense that collections of arcs are a generalization of collections of intervals.  We do point out that some combinations of $p$ and $\alpha$ are unrealizable simply because, by definition, $d\leq e$.  However, that does not fully explain the discrepancy.
The goal of the remainder of this section is to show that the bound derived above only holds for large enough $\beta$.  Here, by ``large enough," we mean that the $\alpha$ required to guarantee $\beta$  is greater for collections of arcs than for collections of intervals (see Remark~\ref{theremark}).

\begin{theorem}\label{dmax}
Let $\mathcal{A}$ be a collection of arcs satisfying conditions $M$, $m$, $n$.  Then $\mathcal{A}$ can have at most
\[d_{max}=\frac{m}{2}(2M-m-1)\]
double intersections.
\end{theorem}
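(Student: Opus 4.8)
The plan is to charge every double intersection to a single A-type arc and to bound the number of double intersections charged to each. The starting observation is the one already exploited above: two doubly intersecting arcs cover the entire circle, so at least one of them must contain the point of minimum agreement. Hence every double intersection involves at least one of the $m$ A-type arcs, and it suffices to distribute the double intersections among these $m$ arcs and count.

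First I would order the A-type arcs as $A_1,\dots,A_m$ by the position of their left endpoints in an $LR$-sequence ending with running count $m$, so that $L_{A_1}$ precedes $L_{A_2}$ precedes $\cdots$ precedes $L_{A_m}$. To each $A_i$ I attach a reference point $p_i$ lying just clockwise of $L_{A_i}$ and inside $A_i$. The charging rule is: send each A-type/B-type double intersection to its unique A-type arc, and send each A-type/A-type double intersection $\{A_i,A_j\}$ to whichever of the two has the earlier left endpoint. In this way every double intersection is charged exactly once.

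The heart of the argument is a counting estimate at $p_i$. I would show that every arc whose double intersection with $A_i$ is charged to $A_i$ must contain $p_i$: a B-type partner reaches past $L_{A_i}$ into the body of $A_i$ by the characterization of double intersection used in Theorem~\ref{DIformula}, and an A-type partner $A_j$ with $L_{A_i}$ preceding $L_{A_j}$ must, because a double intersection of two circle-covering arcs forces $R_{A_j}$ to lie clockwise beyond $L_{A_i}$, also cover $p_i$. Crucially, the $i-1$ arcs $A_1,\dots,A_{i-1}$ already cover $p_i$ as well, since each has its left endpoint before $L_{A_i}$ and therefore its body extends past $p_i$; together with $A_i$ itself this gives $i$ arcs covering $p_i$ that are distinct from all the arcs charged to $A_i$. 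Since the agreement number at $p_i$ is at most $M$, the number of double intersections charged to $A_i$ is at most $M-i$. Summing over the A-type arcs gives $d \le \sum_{i=1}^m (M-i) = mM - \tfrac{m(m+1)}{2} = \tfrac{m}{2}(2M-m-1)$, which is the claimed bound.

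I expect the main obstacle to be arranging the ordering, the reference points, and the charging rule so that each arc contributes the sharp amount $M-i$ rather than the naive $M-1$ that a cruder count (every partner of $A_i$ shares a point with $A_i$) would give. The key realization that makes it sharp is that the A-type arcs with earlier left endpoints automatically occupy the agreement budget at $p_i$ through their bodies, so they must be subtracted; verifying this, and checking that the charged partners are genuinely distinct from $A_1,\dots,A_i$, is where the care is needed. A secondary point to confirm is that the two possible ``middle overlaps'' of two A-type arcs cannot occur simultaneously, which is what guarantees that the later-left-endpoint partner reaches past $L_{A_i}$ and hence covers $p_i$.
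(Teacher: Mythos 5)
Your proof is correct, but it takes a genuinely different route from the paper's. The paper argues by contradiction with a global double count: it sums, over all $2m$ endpoints of A-type arcs, the number of other arcs covering each endpoint, shows this sum $D^*$ satisfies $D^* \ge 2\bigl(d + \binom{m}{2}\bigr)$, and then applies the pigeonhole principle to produce an endpoint covered by $M+1$ arcs whenever $d > m(M-1) - \binom{m}{2}$. You instead give a direct charging argument: every double intersection involves an A-type arc (both observations rest on the fact that doubly intersecting arcs cover the circle, hence one covers the minimum-agreement point), each double intersection is charged to the involved A-type arc with the earliest left endpoint, and the budget at your reference point $p_i$ is sharpened from the naive $M-1$ to $M-i$ because $A_1,\dots,A_{i-1}$ also cover $p_i$ --- their gaps end at left endpoints preceding $L_{A_i}$, while every charged partner (B-type, or A-type $A_j$ with $R_{A_j}$ past $L_{A_i}$) covers $p_i$ as well; summing $\sum_{i=1}^m (M-i)$ gives the bound exactly, with no contradiction needed. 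The verifications you flag as delicate do go through: the double-intersection characterizations you invoke match those the paper establishes for Theorem~\ref{DIformula}, and the ``simultaneous middle overlap'' of two A-type arcs is ruled out because $L_{A_j} > L_{A_i} > R_{A_i}$ forces $R_{A_j} > L_{A_i}$. One caveat: your argument proves only the inequality $d \le \frac{m}{2}(2M-m-1)$, which is all the theorem statement asserts, but the paper's proof additionally constructs a collection attaining the bound; that tightness is what justifies calling the quantity $d_{max}$ and is implicitly used later when $p_{max}$ is treated as a realizable maximum (Corollary~\ref{pmax} and Remark~\ref{theremark}), so if your proof replaced the paper's, the construction would still need to be supplied separately.
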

\begin{proof}
We prove Theorem~\ref{dmax} by first describing a collection of arcs $\mathcal{A}$ with $d_{max}$ double intersections.  Then we suppose there exists another collection of arcs $\mathcal{A'}$ with more than $d_{max}$ double intersections and find a contradiction via a pigeonhole argument.

Let $\mathcal{A}$ be a collection of arcs with maximal running count sequence such that the left endpoint of each A-type arc immediately follows its right endpoint in the $LR$-sequence (see Figure~\ref{myintervalsDI}); so, for each A-type arc $A_i$, there are no endpoints of any arc in the complement of $A_i$.  Therefore, an arc doubly intersects $A_i$ if and only if it intersects at the right endpoint of $A_i$.  The number of such arcs is the running count at the right endpoint.  Recall that A-type arcs must have their right endpoint occur to the left of their left endpoint in the $LR$-sequence.  Thus, it can be seen by inspection that the endpoints of the A-types must occur during the part of the $LR$-sequence where the running count is maximal.  Therefore, there are $(M-1)$ arcs intersecting the right endpoint of each A-type arc, meaning that each A-type arc doubly intersects $M-1$ arcs.

\begin{figure}[h]
  \begin{center}
     \includegraphics{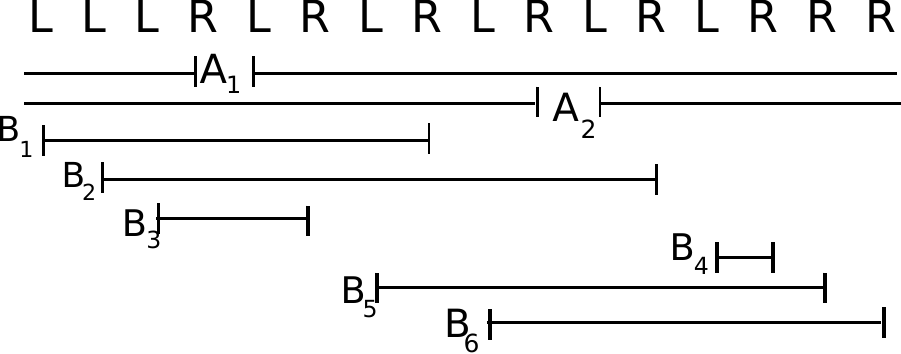}
  \end{center}
  \caption{A collection of arcs where the left endpoint of each A-type arc immediately follows its right endpoint in the $LR$-sequence, as with $\mathcal{A}$ in the proof of Theorem~\ref{dmax}.  Here, $n=8$, $M=5$, and $m=2$.}
\label{myintervalsDI}
\end{figure}

It is clear that each of the A-type arcs pairwise doubly intersect, so by adjusting for those double intersections which are double counted, we have that $\mathcal{A}$ contains exactly
\begin{equation}\label{ijustwannareference}
m(M-1)-{m \choose 2}
\end{equation}
double intersections.  This number produces the $d_{max}$ in the theorem statement.

Now suppose there exists a collection of arcs $\mathcal{A'}$ with more than $m(M-1)-{m \choose 2}$ double intersections.  For each endpoint of an A-type arc $A'_i$, count the number of arcs other than $A'_i$ intersecting that endpoint.  Let $D^*$ be the sum of these counts over all endpoints of all A-type arcs.  We claim that
\[D^*\geq 2(d+{m \choose 2}).\]
Note that each of the A-type arcs cover a point of minimum agreement on the circle and thus pairwise intersect.  For each pair of A-type arcs, at least two of their four endpoints are contained in the other arc, so $D^*\geq 2{m\choose 2}$.  Now consider a double intersection.  If it is between two A-type arcs, then each of the four endpoints is contained in the other arc.  But, two of these containments are already accounted for, so the total contribution to $D^*$ is 2.  If the double intersection is between an A-type arc and a B-type arc, then the endpoints of the A-type arc are contained in the B-type arc so the contribution to $D^*$ is also 2, proving our claim.

Now suppose $d>m(M-1)-{m \choose 2}$, that is, there are more double intersections than what we claim to be the maximum.  Then, $D^*>2m(M-1)$.  Since there are $2m$ endpoints of A-type arcs, the pigeonhole principle states that there must exist an A-type arc $A'_i$ with an endpoint contained in at least $M$ arcs other than $A'_i$.  When we take into account $A'_i$ itself, there are at least $M+1$ arcs containing this endpoint, which contradicts that $M$ is the maximum agreement number.  Therefore, no such collection $\mathcal{A'}$ exists and $d_{max}=\frac{m}{2}(2M-m-1)$.
\end{proof}

From Theorem~\ref{dmax}, we can define a quantity $p_{max}=\frac{d_{max}}{{n\choose2}}$, which is the maximum proportion of double intersections realizable under those conditions.
%(We can likewise define a $p_{min}$ function based on Theorem~\ref{dmin}.)
\begin{corollary}\label{pmax}
For large $n$,
\begin{equation}
p_{max}(\beta,\gamma)=\beta^2 \gamma(2-\gamma)
\end{equation}
\end{corollary}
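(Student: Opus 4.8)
The plan is to obtain $p_{max}(\beta,\gamma)$ by substituting the asymptotic relations $M=\beta n$ and $m=\gamma M=\gamma\beta n$ into the exact expression $d_{max}=\frac{m}{2}(2M-m-1)$ furnished by Theorem~\ref{dmax}, dividing by $\binom{n}{2}$ to form $p_{max}=\frac{d_{max}}{\binom{n}{2}}$ as defined just above the corollary, and then passing to the limit $n\to\infty$. The computation is essentially mechanical, so the proof will amount to writing out this substitution and discarding the terms that vanish in the limit.

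First I would rewrite the numerator. Substituting $m=\gamma\beta n$ and $M=\beta n$ into Theorem~\ref{dmax} gives
\[
d_{max}=\frac{\gamma\beta n}{2}\bigl(2\beta n-\gamma\beta n-1\bigr)=\frac{\gamma\beta n}{2}\bigl(\beta n(2-\gamma)-1\bigr).
\]
Next I would divide by $\binom{n}{2}=\frac{n(n-1)}{2}$, which cancels the factor of $\tfrac12$ and one power of $n$:
\[
p_{max}=\frac{d_{max}}{\binom{n}{2}}=\frac{\gamma\beta\bigl(\beta n(2-\gamma)-1\bigr)}{\,n-1\,}.
\]

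Finally, I would take $n\to\infty$. In the bracket $\beta n(2-\gamma)-1$ the constant $-1$ is negligible against the term linear in $n$, and in the denominator $n-1\sim n$, so the ratio tends to $\gamma\beta\cdot\beta(2-\gamma)=\beta^2\gamma(2-\gamma)$, which is the claimed formula. The only point requiring any care is the bookkeeping of lower-order terms: verifying that both the additive constant inside the parentheses and the difference between $n-1$ and $n$ contribute nothing in the limit. This is routine, so there is no genuine obstacle; the substance of the result lies entirely in the exact double-intersection bound of Theorem~\ref{dmax}, with the corollary serving to record its continuous (large-$n$) form.
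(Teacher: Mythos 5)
Your proposal is correct and follows exactly the route the paper intends: the corollary is stated as an immediate consequence of Theorem~\ref{dmax} and the definition $p_{max}=d_{max}/\binom{n}{2}$, with the substitution $M=\beta n$, $m=\gamma\beta n$ and the passage to the limit $n\to\infty$ left implicit. Your write-up simply makes that routine computation explicit, and the bookkeeping of lower-order terms is handled correctly.
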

We will write $p_{max}(\beta)$ when $\gamma$ is fixed or obvious.

If we are given a fixed $p$, like at the beginning of this section, then we ask ourselves at what values of $\beta$ the resulting bound applies.

\begin{remark}\label{theremark}
For any $\beta$, there is a collection of arcs satisfying conditions $\beta, \gamma,$ and $p$ only if $p<p_{max}(\beta)$.  So we ask, for what $\beta^*$ is $p_{max}(\beta^*)=p$?  Because the $p_{max}$ function is strictly increasing with respect to $\beta$ for allowed values of $\beta$, this is equivalent to asking for which $\beta^*$ are the following conditions met:
\begin{enumerate}
\item If $\beta<\beta^*$, then $p_{max}(\beta)<p$ and the bound is invalid.
\item If $\beta>\beta^*$, then $p_{max}(\beta)>p$ and the bound is valid.
\end{enumerate}
From the explicit formula for the $p_{max}$ function, it is obvious that $\beta^*=\sqrt{\frac{p}{\gamma (2-\gamma)}}$.  It turns out that this $\beta^*$ is also the agreement proportion for which the bound determined at the beginning of this section intersects and surpasses the bound for collections of intervals.
\end{remark}

%For certain values of $\gamma$ and $p$, the bound from this section intersects the bound from the previous section for $\alpha < 1$.  This occurs when $p$ equals the proportion of double intersections in the edge-maximizing graph from Theorem~\ref{discreteEdgeCount}, or when $p = p_{min} = 2\gamma\beta(2\beta-1)$.  Beyond this point, the number of double intersections required to achieve the maximal $LR$-sequence is greater than $p{n\choose 2}$, so the edge count for the formula from section 5 is actually less than the edge count from our formula in this section.

Summarizing our results, if the $\alpha$ required to guarantee a particular $\beta$ is less for collections of intervals than collections of arcs (given $p$), then there exists a collection of arcs satisfying conditions $\beta$, $\gamma$, $\alpha$, and $p$.  If $\alpha$ is greater for the interval bound than the arc bound, then no such collection exists.

\begin{figure}[t]
  \begin{center}
    \includegraphics[scale=0.6]{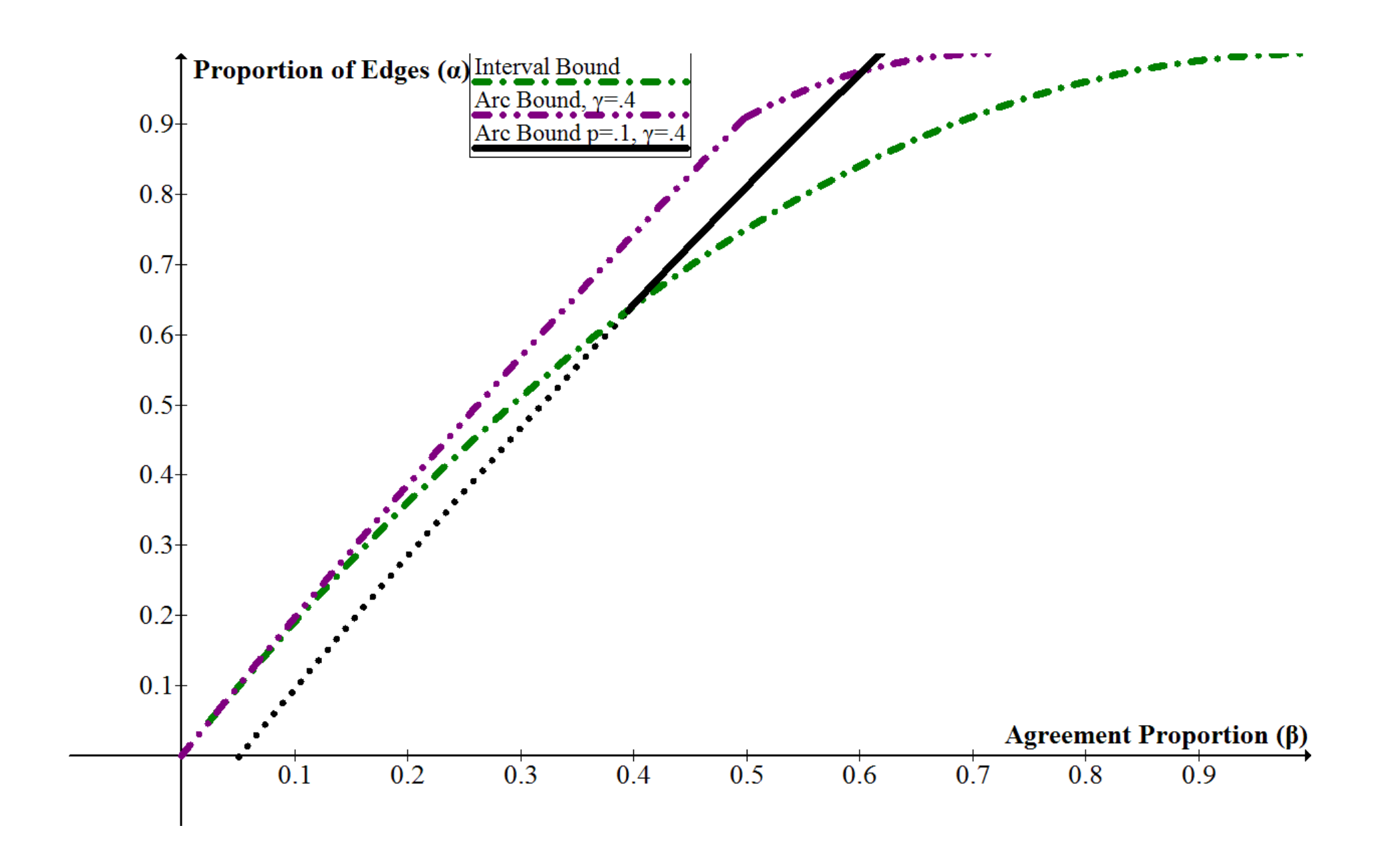}
  \end{center}
  \caption{The edge proportion $\alpha$ needed to guarantee agreement proportion $\beta$.  The dark line shows the bound obtained for $p=.1$.  The dotted line shows where this bound is invalid because the given values of $p,\beta$ and $\gamma$ are not simultaneously realizable.  The bound becomes valid where it passes the interval bound.}
  \label{Graph6}
\end{figure}

\section{Conclusion}
The combinatorial properties of collections of circular arcs were explored in several ways.  The agreement at the endpoints of arcs was related to a count of the intersections and double intersections (Theorem~\ref{edgeformula}).  For fixed minimum and maximum agreement, the maximum possible number of intersections (Theorem~\ref{discreteEdgeCount}) and double intersections (Theorem~\ref{dmax}) were found.  These bounds were used to determine the number of pariwise intersections necessary to guarantee a particular maximum agreement.  

In voting theory, voters may choose among a finite set of candidates on a circular spectrum.  Then it may be of interest to count the voters' agreement in the same way that we have in this paper.   Any voter's approval set consists of the set of candidates they approve of.  Then in a circular arc representation, each arc has its endpoints at two candidates, covering all the candidates which its voter supports.  This restriction on the family of arcs representing the voters may allow the bound relating pairwise agreement and the agreement number to be improved.  In fact, we have proved the following bound (but do not give the proof here):%Do we have to note that we don't give the proof here?
\begin{proposition}
In a collection of $n$ circular arcs representing voter approval of $N$ candidates, if the maximum agreement is $M$, there are at most $e_{max}$ intersections among the arcs, where $e_{max}$ is defined by the following:
\begin{equation}
e_{max}\leq \frac{2nM-\frac{n^2}{N}-n}{2}.
\end{equation}
\end{proposition}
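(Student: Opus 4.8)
The plan is to work directly with the $N$ candidate positions, which cut the circle into $N$ elementary segments $s_1,\dots,s_N$; since every arc has its endpoints at candidates, each voter-arc is a contiguous block of these segments and no endpoint lies in the interior of a segment. Write $k_j$ for the number of arcs covering $s_j$, so that $k_j\le M$ for all $j$ by the maximum-agreement hypothesis, and write $\ell_j$ for the number of left endpoints located at candidate $c_j$, so that $\sum_{j=1}^N \ell_j = n$. I would count edges through degrees, using $2e=\sum_{i=1}^n \deg(A_i)$. The key local estimate is that every arc $B\neq A_i$ meeting $A_i$ either covers the first segment of $A_i$ or has its left endpoint strictly interior to $A_i$: if $B$ meets $A_i$ but misses its first segment, then, $B$ being a contiguous block, $B$ must begin inside $A_i$. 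Writing $a(i)$ for the index of the first segment of $A_i$ and $\lambda_i$ for the number of left endpoints strictly interior to $A_i$, this gives $\deg(A_i)\le (k_{a(i)}-1)+\lambda_i$.

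Summing this bound is where the two parameters enter. First, $\sum_i (k_{a(i)}-1)=\sum_{j=1}^N \ell_j k_j - n$, since the arcs whose first segment is $s_j$ are exactly the $\ell_j$ arcs with a left endpoint at $c_j$. Second, $\sum_i \lambda_i$ equals $\sum_{\text{left endpoints}}(\text{number of arcs strictly containing it})$, and a left endpoint at $c_j$ is strictly contained essentially in the $k_j-\ell_j$ arcs that pass through $c_j$, giving $\sum_i\lambda_i\le \sum_{j=1}^N \ell_j(k_j-\ell_j)$. Combining, $2e\le 2\sum_{j=1}^N \ell_j k_j - n - \sum_{j=1}^N \ell_j^2$. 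Now I apply $k_j\le M$ together with $\sum_j \ell_j=n$ to get $\sum_j \ell_j k_j\le Mn$, and Cauchy--Schwarz (equivalently the power-mean inequality) to get $\sum_j \ell_j^2\ge (\sum_j \ell_j)^2/N = n^2/N$. These two inequalities pull in the same direction as the extremal configuration, and substituting yields $2e\le 2Mn-n-\tfrac{n^2}{N}$, which is exactly the asserted bound $e_{max}\le \frac{2nM-n^2/N-n}{2}$. The bound is attained when every $k_j=M$ and the left endpoints are distributed evenly, $\ell_j=n/N$, which is consistent with the small cases one can check by hand.

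The step I expect to be the main obstacle is the containment count in the second display, because it hides the coincidence of endpoints at a common candidate. When several arcs share an endpoint at $c_j$, whether one such arc strictly contains another's endpoint depends on the infinitesimal ordering chosen when we perturb coincident endpoints apart, and a careless ordering can make $\sum_i \lambda_i$ exceed $\sum_j \ell_j(k_j-\ell_j)$ by as much as $\sum_j \binom{\ell_j}{2}$. I would resolve this by fixing, at each candidate, a perturbation that orders the coincident endpoints so as not to manufacture spurious strict containments (this does not change the intersection graph, hence does not change $e$), and by checking that the first-segment term $k_{a(i)}-1$ and the interior-start term $\lambda_i$ together never count a common neighbor in a way that breaks the inequality. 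A cross-check via the machinery already in the paper is available: by the Edge Formula (Theorem~\ref{edgeformula}) one has $e=\frac{C-n}{2}-d\le \frac{C-n}{2}$, and the same segment/Cauchy--Schwarz bookkeeping applied to the running count sum $C$ should reproduce the estimate, providing independent confirmation of the constants.
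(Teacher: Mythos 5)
The paper states this proposition without proof (``we have proved the following bound (but do not give the proof here)''), so your proposal can only be judged on its own terms, and as written it has a genuine gap. In this model two arcs are adjacent in the intersection graph as soon as they share a single candidate; in particular an arc $B$ whose right endpoint is the candidate $c_j$ and an arc $A_i$ whose left endpoint is $c_j$ do intersect, and such ``touching'' pairs are precisely the agreements the voting interpretation cares about. Your dichotomy --- every neighbor of $A_i$ either covers the first segment of $A_i$ or begins strictly inside $A_i$ --- misses these pairs entirely, on both ends: $A_i$ likewise neither covers $B$'s first segment nor begins strictly inside $B$. The dichotomy implicitly equates ``meets'' with ``shares a segment.'' Concretely, take $N=2$, $n=2$, and the two semicircles $A_1=[c_1,c_2]$, $A_2=[c_2,c_1]$: then $e=1$, $\ell_1=\ell_2=1$, $k_1=k_2=1$, and your intermediate display asserts $2e\le 2\sum_j\ell_jk_j-n-\sum_j\ell_j^2=4-2-2=0$, which is false. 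The root cause is that your bookkeeping only ever uses the segment coverages $k_j$, so it never sees the agreement at candidate points, where $M$ is actually attained (in the example $M=2$ while every $k_j=1$). For the same reason your proposed perturbation repair cannot work: a perturbation that preserves the intersection graph is \emph{forced} to keep touching arcs overlapping, so every right endpoint at $c_j$ must land clockwise of every left endpoint at $c_j$ (contributing $\rho_j\ell_j$ containments, where $\rho_j$ is the number of right endpoints there), and any ordering of the $\ell_j$ coincident left endpoints forces exactly $\binom{\ell_j}{2}$ containments among them. These are not spurious, no ordering removes them, and so the inequality $\sum_i\lambda_i\le\sum_j\ell_j(k_j-\ell_j)$ is simply false.

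The good news is that your global strategy (counting left-endpoint containments candidate by candidate, then Cauchy--Schwarz) is sound and the constants survive the repair; what must change is the local count and the constraint invoked. Work after an intersection-preserving perturbation, and for each candidate $c_j$ let $t_j$, $\rho_j$, $\ell_j$ be the numbers of arcs passing through, ending at, and starting at $c_j$; the correct constraint is $t_j+\rho_j+\ell_j\le M$, the agreement at the point $c_j$, which is stronger than $k_j\le M$. Every edge $\{A,B\}$ satisfies $L_A\in B$ or $L_B\in A$, so $2e\le 2P$ where $P$ counts ordered pairs $(A,B)$, $A\neq B$, with $L_A\in B$. If $L_A$ sits at $c_j$ with rank $r$ among the $\ell_j$ coincident left endpoints there, then every arc containing $L_A$ contains $c_j$, and the $\ell_j-r$ starters ranked after $A$ do not contain $L_A$ (barring the degenerate full-circle arc), so $L_A$ lies in at most $(t_j+\rho_j+\ell_j)-1-(\ell_j-r)\le M-1-\ell_j+r$ arcs other than $A$. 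Summing over $r$ and $j$ gives $P\le\sum_j\bigl(\ell_j(M-1)-\binom{\ell_j}{2}\bigr)$, hence $2e\le 2nM-n-\sum_j\ell_j^2\le 2nM-n-\frac{n^2}{N}$ by Cauchy--Schwarz, which is exactly the claimed bound. Note how the forced $\binom{\ell_j}{2}$ containments you tried to perturb away are instead cancelled by the rank deficit, and the $\rho_j\ell_j$ term is absorbed because $\rho_j$ enters the agreement at $c_j$; your version, which sees only $k_j$, has access to neither mechanism.
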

\begin{corollary}
For large $n$, the above bound reduces to
\begin{equation}
\alpha=2\beta-\frac{1}{N}.
\end{equation}
\end{corollary}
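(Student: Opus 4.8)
The plan is to perform the asymptotic normalization directly on the inequality from the preceding Proposition, following the same conventions established in Theorem~\ref{continuousEdgeCount}. Recall that $\alpha$ denotes the proportion of all possible edges, so I set $\alpha = e_{max}/{n \choose 2}$; I use $e_{max}$ rather than $e_{min}$ because the two differ only by the substitution of $M-1$ for $M$ together with an additive constant, a difference that is of lower order and vanishes in the limit. I also use $\beta = M/n$ for the agreement proportion, and I treat $N$, the number of candidates, as a fixed constant independent of $n$.

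First I would divide the Proposition's bound by ${n \choose 2} = n(n-1)/2$, which gives
\[
\alpha \le \frac{2nM - \frac{n^2}{N} - n}{n(n-1)}.
\]
Then I would substitute $M = \beta n$, so that $2nM = 2\beta n^2$, producing a quotient whose numerator and denominator are both quadratic in $n$:
\[
\alpha \le \frac{2\beta n^2 - \frac{1}{N}n^2 - n}{n^2 - n}.
\]

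Dividing numerator and denominator by $n^2$ isolates the leading behavior,
\[
\alpha \le \frac{2\beta - \frac{1}{N} - \frac{1}{n}}{1 - \frac{1}{n}},
\]
and letting $n \to \infty$ with $N$ held fixed collapses the two $1/n$ terms, leaving $\alpha = 2\beta - \frac{1}{N}$, as claimed.

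The computation itself is entirely routine; the only point demanding care is the scaling regime. Since the limiting expression $2\beta - 1/N$ still carries an explicit $N$, the statement is only meaningful when $N$ is treated as a constant while $n$ grows, corresponding to a fixed slate of candidates with an unboundedly large electorate. If instead $N$ were permitted to increase with $n$, the $-1/N$ term would disappear and the bound would degenerate to $\alpha = 2\beta$, precisely the worst-case interval bound recovered by setting $\gamma = 1$ in Theorem~\ref{continuousEdgeCount}. Thus the main (and very mild) obstacle is articulating this regime precisely rather than surmounting any analytic difficulty: once $N$ is fixed, the limit is forced term by term.
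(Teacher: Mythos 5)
Your computation is correct and is evidently the intended argument: the paper states this corollary without any proof at all (it appears in the conclusion, following a proposition whose proof is likewise omitted), and the natural derivation is exactly your normalization of the proposition's bound by ${n \choose 2}$, the substitution $M = \beta n$ with $N$ held fixed, and passage to the limit. Your side remarks --- that the $e_{min}$/$e_{max}$ distinction washes out asymptotically via $e_{min}(M) = e_{max}(M-1)+1$, and that the statement is only meaningful in the regime of fixed $N$ and growing $n$ --- are sound and, if anything, make the omitted argument more careful than the paper's presentation.
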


\subsection*{Acknowledgements}
We thank Michael Earnest and Emil Guliyev for careful reading of this paper.

\bibliographystyle{plain}
\bibliography{Bibliography}

% A restatement of a theorem from section 6.
%\begin{theorem}\label{discreteDI}
%Out of the set of all collections of $n$ arcs with maximum agreement $M$ and minimum agreement $m$, the collection of arcs with the maximum number of double intersections will have
%\[d_{max} = \frac{m}{2}(2M-m-1).\]
%\end{theorem}\label{continuousDI}
%\begin{corollary}
%For sufficiently large $n$, we can say that $M=\beta n$, $m=\gamma M$, and restate Theorem~\ref{discreteDI} as
%
%\[\frac{d_{max}}{{n\choose 2}} = \beta^2\gamma(2-\gamma).\]
%\end{corollary}

\end{document}